\numberwithin{equation}{section}
\def\subsection{\@startsection{subsection}{2}
  \z@{0.0\linespacing}{-.5em}
  {\normalfont\bfseries}}
\newtheorem{theorem}{Theorem}[section]
\newtheorem{proposition}[theorem]{Proposition}
\newtheorem{lemma}[theorem]{Lemma}
\theoremstyle{remark}
\newtheorem{remark}[theorem]{Remark}
\DeclareMathOperator{\supp}{supp\,}
\DeclareMathOperator{\divop}{div}
\DeclareMathOperator{\Tr}{Tr\,}
\DeclareMathOperator{\tr}{tr\,}
\DeclareMathOperator{\rad}{rad}
\title[Negative energy blowup for the Hartree hierarchy]{Negative energy blowup results for the focusing Hartree hierarchy via identities of virial and localized virial type}
\author{Aynur Bulut}
\address{Department of Mathematics, Louisiana State University}
\email{aynurbulut@lsu.edu}
\begin{document}
\begin{abstract}
We establish virial and localized virial identities for solutions to the Hartree hierarchy, an infinite system of partial differential equations which arises in mathematical modeling of many body quantum systems.  As an application, we use arguments originally developed in the study of the nonlinear Schr\"odinger equation (see work of Zakharov, Glassey, and Ogawa--Tsutsumi) to show that certain classes of negative energy solutions must blow up in finite time.  

The most delicate case of this analysis is the proof of negative energy blowup without the assumption of finite variance; in this case, we make use of the localized virial estimates, combined with the quantum de Finetti theorem of Hudson and Moody and several algebraic identities adapted to our particular setting.  Application of a carefully chosen truncation lemma then allows for the additional terms produced in the localization argument to be controlled.
\end{abstract}
\thanks{January 2018}
\maketitle
\tableofcontents
\section{Introduction}

Fix $d\geq 2$, and let $V\in\mathcal{S}(\mathbb{R}^d;\mathbb{R})$ be a smooth bounded potential in the Schwartz space with even symmetry.  In this paper, we study an infinite system of coupled PDEs, often referred to as the {\it Hartree hierarchy}, which arises in the study of many-body quantum mechanics.  This hierarchy describes the evolution of a sequence $(\gamma^{(k)})_{k\geq 1}$ with each $\gamma^{(k)}$ mapping $\mathbb{R}^{dk}\times\mathbb{R}^{dk}$ into $\mathbb{C}$ satisfying symmetry properties matching those of the {\it factorized} profiles 
\begin{align}
\gamma^{(k)}(x_1,x_2,\cdots,x_k,x'_1,x'_2,\cdots,x'_k)=\phi(x_1)\cdots\phi(x_k)\overline{\phi(x'_1)\cdots\phi(x'_k)}\label{factorized}
\end{align}
for $x_1,\cdots,x_k,x'_1,\cdots,x'_k\in\mathbb{R}^d$.  We begin by specifying our conventions for differentiation and the Fourier transform, which are motivated by the form (\ref{factorized}).  In particular, for $k\geq 1$ and $\gamma^{(k)}\in L^2(\mathbb{R}^{dk}\times \mathbb{R}^{dk})$, we take the Fourier transform in $x_i$ and $x_i'$, $i=1,\cdots, k$ to be defined by, respectively
\begin{align*}
&\mathcal{F}_{x_i}[\gamma^{(k)}](x_1,\cdots,x_{i-1},\xi_i,x_{i+1},\cdots, x_k,x'_1,\cdots,x'_k)\\
&\hspace{0.6in}=\int e^{-ix_i\xi_i}\gamma^{(k)}(x_1,\cdots,x_k,x'_1,\cdots,x'_k)dx_i,\\
&\mathcal{F}_{x'_i}[\gamma^{(k)}](x_1,\cdots,x_k,x'_1,\cdots,x'_{i-1},\xi'_i,x'_{i+1},\cdots, x'_k)\\
&\hspace{0.6in}=\int e^{ix'_i\xi'_i}\gamma^{(k)}(x_1,\cdots,x_k,x'_1,\cdots,x'_k)dx'_i.
\end{align*}
With this notation, we have the standard identities $\nabla_{x_i} \gamma^{(k)}=[(i\xi_i)\widehat{\gamma^{(k)}}]^\vee$ and 
$\nabla_{x'_i} \gamma^{(k)}=[(-i\xi'_i)\widehat{\gamma^{(k)}}]^\vee$ for $1\leq i\leq k$.

Correspondingly, we let $\Delta^{(k)}_\pm$ be the operator given by
\begin{align*}
\Delta^{(k)}_\pm:=\sum_{i=1}^k\Delta_{x_i}-\Delta_{x_i'}.
\end{align*}

The Hartree hierarchy is then given by
\begin{align}
\label{label_1}\left\lbrace\begin{array}{c}i\partial_t \gamma^{(k)}+\Delta^{(k)}_\pm\gamma^{(k)}=\mu\sum_{j=1}^k B_{j,k+1}^\pm\gamma^{(k+1)},\\
\gamma^{(k)}(0,{\mathbf x},{\mathbf x}')=\gamma^{(k)}_0({\mathbf x},{\mathbf x}'),\end{array}\right.
\end{align}
for functions $(t,{\mathbf x},{\mathbf x}')\mapsto \gamma^{(k)}(t,{\mathbf x},{\mathbf x}')$, $k\geq 1$, where the variables $t$, ${\mathbf x}$, and ${\mathbf x}'$ belong, respectively, to a time interval $I\subset \mathbb{R}$ and the spaces $\mathbb{R}^{dk}$ and $\mathbb{R}^{dk}$.  Here, we take $\mu\in \{-1,1\}$, and 
\begin{align*}
B_{j,k+1}^\pm\gamma^{(k+1)}=B_{j,k+1}^+\gamma^{(k+1)}-B_{j,k+1}^-\gamma^{(k+1)},
\end{align*}
with $B_{j,k+1}^\pm$ defined by
\begin{align*}
(B_{j,k+1}^+\gamma^{(k+1)})(t,{\mathbf  x},{\mathbf x}')&:=\int \gamma^{(k+1)}({\mathbf x},y,{\mathbf x}',y)V(x_j-y)dy,\\
(B_{j,k+1}^-\gamma^{(k+1)})(t,{\mathbf x},{\mathbf x}')&:=\int \gamma^{(k+1)}({\mathbf x},y,{\mathbf x}',y)V(x_j'-y)dy,
\end{align*}
under the notational conventions ${\mathbf x}=(x_1,\cdots,x_k)$ and ${\mathbf x}'=(x'_1,\cdots,x'_k)$, where $x_i$ and $x'_i$ are vectors in $\mathbb{R}^d$ for $1\leq i\leq k$.  

This hierarchy arises in the study of the mean field limit of quantum mechanical systems, where the pairwise interactions between quantum particles are governed by the potential $V$ (see for instance \cite{S}).  When $V$ is formally taken to be the Dirac measure $\delta(x)$, $(\ref{label_1})$ becomes the {\it Gross-Pitaevskii (GP) hierarchy} (in fact, derivations along these lines can be made precise by considering a sequence of approximating potentials $V_N$, scaled appropriately as $N\rightarrow\infty$; see, e.g. \cite{ESY,ESY2,KM} as well as \cite{CPT,CPHigher,CH,CHPS,CHPS2,KSS,S} and the references cited therein).  

The equation ($\ref{label_1}$) enjoys a special relationship with the classical Hartree equation (that is, the nonlinear Schr\"odinger equation with nonlocal convolution-type nonlinearity).  In particular, when the initial data $(\gamma_0^{(k)})$ has factorized form 
\begin{align*}
\gamma_{0}^{(k)}({\mathbf x},{\mathbf x}')=\prod_{j=1}^k \phi_0(x_j)\overline{\phi_0(x_j')},\quad k\geq 1,
\end{align*}
with $\phi_0\in H^2(\mathbb{R}^d)$, the function
\begin{align} 
\gamma^{(k)}(t,{\mathbf x},{\mathbf x}')&:=\prod_{j=1}^k \phi(t,x_j)\overline{\phi(t,x_j')}\label{label_2}
\end{align}
is a particular solution of ($\ref{label_1}$) provided that $t\mapsto \phi(t)$ is a solution of the Hatree equation
\begin{align}
\label{label_3}i\partial_t\phi+\Delta \phi=\mu (V*|\phi|^2)\phi
\end{align}
with $\phi(0)=\phi_0$.  In accordance with the usual nomenclature for ($\ref{label_3}$) (and, more generally, nonlinear Schr\"odinger equations), we will say that the hierarchy ($\ref{label_1}$) has {\it defocusing} nonlinearity when $\mu=1$, and {\it focusing} nonlinearity when $\mu=-1$.  Similarly, the GP hierarchy has an analogous relationship with the cubic nonlinear Schr\"odinger equation.

\vspace{0.2in}

The present work is motivated by work of Chen, Pavlovic, and Tzirakis \cite{CPT}, in which the authors establish mass conservation, energy conservation, and virial identities for the GP hierarchy, and use these along to establish negative energy blowup for finite variance initial data in that setting.  Our analysis also relies on a class of ideas surrounding the quantum de Finetti theorem of Hudson and Moody \cite{HM}, which has recently become a key tool in the analysis of hierarchies of the form ($\ref{label_1}$) -- for an overview of such arguments, we refer to work of Lewin, Nam and Rougerie \cite{LNR} in the setting of the Hartree hierarchy, and works of Chen, Hainzl, Pavlovic and Seringer \cite{CHPS,CHPS2} for the GP hierarchy (where the quantum de Finetti result played a key role in the proof of unconditional uniqueness for the hierarchy), as well as the references cited in these works.  

\vspace{0.2in}

In this paper, we continue this line of study to establish a class of identities for solutions to ($\ref{label_1}$) which hold in analogy to the usual virial identities for nonlinear Schr\"odinger equations.  As an application of these ideas, we invoke the classical argument of Glassey \cite{G} to show that certain classes of negative energy solutions must blow up in finite time, under the assumption of initially finite variance (see Theorem $\ref{label_5}$ below).  Moreover, under certain hypotheses on the potential $V$, we establish negative energy blowup in the absence of the finite variance assumption.  This result is in the spirit of a result due to Ogawa and Tsutsumi \cite{OT} for the nonlinear Schr\"odinger equation; see also work of Hirata \cite{H} for a related result for the Hartree equation.  The main tool (and the main novelty of the present work) is the derivation of a class of localized virial identities, the derivation of which in the hierarchy setting produces a number of additional terms which must be dealt with carefully.

\vspace{0.2in}

We now prepare some notation to state our main results.  Throughout the paper, we will make frequent use of the trace operator, given for $f:\mathbb{R}^{dk}\times\mathbb{R}^{dk}\rightarrow \mathbb{C}$ by
\begin{align*}
\Tr(f)&:=\int f({\mathbf x},{\mathbf x})d{\mathbf x}.
\end{align*}
with ${\mathbf x}=(x_1,\cdots,x_k)\in\mathbb{R}^{dk}$ as before, as well as the partial trace in the last variable, 
\begin{align*}
\Tr_k(f)(x_1,\cdots,x_{k-1},x'_1,\cdots,x'_{k-1})&:=\int f({\mathbf x},{\mathbf x})dx_k
\end{align*}
for $k\geq 1$, and use the notational convention
\begin{align*}
\gamma^{(k)}(t,{\mathbf x},{\mathbf x}')=\gamma^{(k)}({\mathbf x},{\mathbf x}'),\quad t\in I,
\end{align*}
omitting explicit specification of the time variable when there is no potential for confusion.  

\vspace{0.2in}

We say that a solution $(\gamma^{(k)})_{k\geq 1}$ to ($\ref{label_1}$) (in the sense of the integral Duhamel formulation) is ``(A)--(D)-admissible'' if it satisfies the following properties for all $k\in\mathbb{N}$:
\begin{enumerate}
\item[(A)] $\gamma^{(k)}\in C(I;H^2(\mathbb{R}^{dk}\times \mathbb{R}^{dk}))$, $\Tr \gamma^{(k)}=1$, and $\gamma^{(k)}\succeq 0$, in the sense that 
\begin{align*}
\bigg\langle \psi,\int_{\mathbb{R}^d}\gamma^{(k)}({\mathbf x},\cdot)\psi(\mathbf{x})d{\mathbf x}\bigg\rangle_{L^2_{{\mathbf x}'}(\mathbb{R}^{dk})}\geq 0\quad \textrm{for every}\quad \psi\in L^2,
\end{align*}
\item[(B)] $\gamma^{(k)}$ is symmetric with respect to permutations of the variables ${\mathbf x}$ and permutations of the variables ${\mathbf x}'$: for every $\sigma\in S_k$ let $P_\sigma$ denote the map $(x_1,x_2,\cdots,x_k)\mapsto (x_{\sigma(1)},x_{\sigma(2)},\cdots,x_{\sigma(k)})$.  Then for every $\sigma,\tau\in S_k$, one has
\begin{align*}
\gamma^{(k)}(t,{\mathbf x},{\mathbf x}')=\gamma^{(k)}(t,P_\sigma(\mathbf{x}),P_\tau(\mathbf{x}')).
\end{align*}
\item[(C)] $\gamma^{(k)}$ is Hermitian:
\begin{align*}
\gamma^{(k)}(t,{\mathbf x},{\mathbf x}')=\overline{\gamma^{(k)}(t,{\mathbf x}',{\mathbf x})}, 
\end{align*}
and,
\item[(D)] $\gamma^{(k)}$ is admissible:
\begin{align*}
\gamma^{(k)}(t)=\Tr_{k+1}(\gamma^{(k+1)}(t)),\,\textrm{that is,}
\end{align*}
\begin{align*}\gamma^{(k)}(t,{\mathbf x},{\mathbf x}')=\int \gamma^{(k+1)}(t,{\mathbf x},x_{k+1},{\mathbf x}',x_{k+1})dx_{k+1}.
\end{align*}
\end{enumerate}

As we note in Section $2$ below, the properties (A)--(D) are preserved under the evolution---that is, if the sequence of initial data $(\gamma^{(k)}_0)_{k\geq 1}$ satisfies (A)--(D) as functions of ${\mathbf x}$ and ${\mathbf x'}$, then the corresponding solution is (A)--(D)-admissible.  For treatment of this invariance in the case of the Gross--Pitaevskii hierarchy, see Section $5$ and Appendix B in \cite{ChTa}; we remark that the arguments for ($\ref{label_1}$) are similar.

\vspace{0.2in}

As we will see below, (A)--(D)-admissible solutions to (\ref{label_1}) obey conservation of two relevant quantities: the mass $\Tr(\gamma_0^{(1)}(t))$, and the energy
\begin{align}
E(t):=-\frac{1}{2}\Tr(\Delta_{x_1}\gamma^{(1)}(t))+\frac{\mu}{4}\Tr(B_{1,2}^+\gamma^{(2)}(t)).\label{label_4}
\end{align}

These conserved quantities play a fundamental role in the analysis of long-time and global properties of the evolution (see also \cite{CPT,CPHigher} for related results concerning the GP hierarchy).  Indeed, when the sign $\mu$ of the nonlinearity is positive, the conserved energy $E(t)$ gives uniform-in-time control over each of its component terms; in a variety of settings, this information is sufficient to conclude that solutions exist globally in time.  On the other hand, no such control is guaranteed when $\mu$ is negative, and, as shown in the work of Zakharov \cite{Z} and Glassey \cite{G}, an initial negative value for the energy leads to finite-time blowup results for the nonlinear Schr\"odinger equation under an assumption of initially finite variance (see also \cite{C} for a comprehensive treatment of these and related results).  

\vspace{0.2in}

Our first theorem is a variant of Glassey's argument, adapted to the hierarchy ($\ref{label_1}$).  To state this result, we will make use of the quantity
\begin{align*}
V_1(t)=\Tr(|x|^2\gamma^{(1)}(t)).
\end{align*}
for solutions $(\gamma^{(k)})_{k\geq 1}$ defined on a time interval $I\subset\mathbb{R}$, and $t\in I$.

\begin{theorem}
\label{label_5}
Fix $d\geq 2$, $\mu=-1$ and suppose that $V\in\mathcal{S}(\mathbb{R}^d;\mathbb{R})$ is a bounded even function such that
\begin{align*}
V+\frac{1}{2}x\cdot \nabla V\leq 0.
\end{align*}

Let $(\gamma^{(k)})_{k\geq 1}$ be an (A)--(D)-admissible solution to (\ref{label_1}) defined on an interval $I\subset\mathbb{R}$ with
\begin{align*}
V_1(0)<\infty
\quad \textrm{and}\quad E(0)<0.
\end{align*}
Then $I$ is bounded.
\end{theorem}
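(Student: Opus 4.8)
The plan is to adapt Glassey's convexity argument to the hierarchy. The object to track is the variance $V_1(t)=\Tr(|x|^2\gamma^{(1)}(t))$; the goal is to prove that under the stated hypotheses $V_1$ satisfies the differential inequality $V_1''(t)\le 16E_1$ on $I$, and then to use $E_1<0$ together with the nonnegativity of $V_1$ to force $I$ to be bounded.

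First I would record two virial identities for $V_1$ (these are instances of the virial identities announced in the introduction, and one may equally derive them in place). Differentiating $V_1$ once and inserting the $k=1$ equation of (\ref{label_1}), the nonlinear contribution $\mu B_{1,2}^\pm\gamma^{(2)}$ drops out because $B_{1,2}^+\gamma^{(2)}$ and $B_{1,2}^-\gamma^{(2)}$ coincide on the diagonal $x_1=x_1'$; after an integration by parts against the weight $|x_1|^2$ one finds that $V_1'(t)=-2i\int x_1\cdot(\nabla_{x_1}-\nabla_{x_1'})\gamma^{(1)}(t,x_1,x_1)\,dx_1$, a first-order functional of $\gamma^{(1)}$ alone (real by the Hermitian property (C)). Differentiating a second time and using (\ref{label_1}) again, the linear part yields the kinetic energy $8\Tr(-\Delta_{x_1}\gamma^{(1)}(t))$, while the nonlinear part now survives: on the diagonal, the gradients of $B_{1,2}^\pm\gamma^{(2)}$ in $x_1$ and $x_1'$ reduce (the terms carrying a derivative of $\gamma^{(2)}$ cancel, since they are multiplied by $V(x_1-y)-V(x_1'-y)$, which vanishes on the diagonal) to $\pm\int\gamma^{(2)}(t,x_1,y,x_1,y)(\nabla V)(x_1-y)\,dy$, and one obtains
\begin{align*}
V_1''(t)=8\Tr(-\Delta_{x_1}\gamma^{(1)}(t))-4\mu\int\!\!\int x_1\cdot(\nabla V)(x_1-y)\,\gamma^{(2)}(t,x_1,y,x_1,y)\,dx_1\,dy.
\end{align*}

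Next I would rewrite this in sign-definite form using the conserved energy. Since $\mu=-1$, the definition (\ref{label_4}) gives $\Tr(-\Delta_{x_1}\gamma^{(1)})=2E_1+\tfrac12\Tr(B_{1,2}^+\gamma^{(2)})$, with $\Tr(B_{1,2}^+\gamma^{(2)})=\int\!\int V(x_1-y)\gamma^{(2)}(t,x_1,y,x_1,y)\,dx_1\,dy$. Symmetrizing the factor $x_1\cdot(\nabla V)(x_1-y)$ under the exchange $x_1\leftrightarrow y$ — which is permissible because $\gamma^{(2)}(t,x_1,y,x_1,y)=\gamma^{(2)}(t,y,x_1,y,x_1)$ by the symmetry (B), while $\nabla V$ is odd ($V$ being even) — replaces it with $\tfrac12(x_1-y)\cdot(\nabla V)(x_1-y)$. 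Using conservation of $E_1$ and combining, one reaches
\begin{align*}
V_1''(t)=16E_1+4\int\!\!\int\Big[V(x_1-y)+\tfrac12(x_1-y)\cdot(\nabla V)(x_1-y)\Big]\,\gamma^{(2)}(t,x_1,y,x_1,y)\,dx_1\,dy.
\end{align*}
The positivity property (A) at level two ensures that the diagonal density $\gamma^{(2)}(t,x_1,y,x_1,y)$ is nonnegative, and the hypothesis $V+\tfrac12 x\cdot\nabla V\le 0$ makes the bracket nonpositive, so the double integral is $\le 0$ and $V_1''(t)\le 16E_1<0$ for all $t\in I$.

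Finally, the classical convexity argument closes the proof: since $\gamma^{(1)}(t)\succeq 0$ one has $V_1(t)=\int|x_1|^2\gamma^{(1)}(t,x_1,x_1)\,dx_1\ge 0$, so $V_1$ is a nonnegative $C^2$ function on $I$ with $V_1(t)\le V_1(0)+V_1'(0)\,t+8E_1\,t^2$; because $E_1<0$ the right-hand side is negative for all sufficiently large $|t|$, which contradicts $V_1\ge 0$ if $I$ is unbounded, so $I$ must be bounded. I expect the one genuinely delicate point to be the rigorous justification of the virial computation in the presence of the unbounded weight $|x_1|^2$: one must first show that $V_1(0)<\infty$ propagates to $V_1\in C^2(I)$ and that the integrations by parts above are legitimate. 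This is handled by the standard device of replacing $|x_1|^2$ by truncated weights $\chi_R(x_1)|x_1|^2$ with $\chi_R$ a smooth cutoff, computing $V_{1,R}''$ exactly, controlling the resulting $R$-dependent error terms (which vanish as $R\to\infty$ thanks to $\gamma^{(1)},\gamma^{(2)}\in C(I;H^2)$), and passing to the limit; with the identity above in hand the remainder of the argument is entirely classical.
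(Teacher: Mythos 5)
Your proposal is correct and follows essentially the same route as the paper: the virial identity $V_1''=8\Tr(-\Delta_{x_1}\gamma^{(1)})-4\mu\int x\cdot(\nabla V)(x-y)\,\gamma^{(2)}(x,y,x,y)\,dx\,dy$, substitution of the conserved energy, symmetrization of $x\cdot\nabla V$ to $\tfrac12(x-y)\cdot\nabla V$ via property (B) and the evenness of $V$, the sign hypothesis together with $\gamma^{(2)}(x,y,x,y)\ge 0$ to get $V_1''\le 16E_1<0$, and Glassey's convexity contradiction with $V_1\ge 0$. Your closing remark about justifying the unbounded-weight integrations by parts through truncated weights is a sensible extra precaution that the paper does not spell out, but it does not change the argument.
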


As we remarked above, a similar result was established for the GP hierarchy in \cite{CPT}; see also another instance of a related argument in \cite{CHPS2}.  In view of Theorem $\ref{label_5}$, it is natural to consider whether the finite variance condition $V_1(0)<\infty$ can be relaxed.  For the nonlinear Schr\"odinger equation, a partial result in this direction has been given by Ogawa and Tsutsumi \cite{OT} (see also work of Hirata \cite{H} for a related result concerning certain instances of the Hartree equation).  In our setting, we prove the following theorem:
\begin{theorem}
\label{label_6}
Fix $\mu=-1$ and suppose that $V\in\mathcal{S}(\mathbb{R}^d;\mathbb{R})$ is a bounded even function with $V\geq 0$, 
\begin{align}
V+\frac{1}{2}x\cdot \nabla V\leq 0,\label{label_7}
\end{align}
\begin{align}
\label{label_8}\sup_{|x|\geq R}|x|\,|(\nabla V)(x)|\rightarrow 0
\end{align}
as $R\rightarrow\infty$, and
\begin{align}
\label{label_9} \frac{\lVert\, |x|\, |\nabla V(x)|\, \rVert_{L^1(|x|\leq R^{1/2})}}{R^{(d-1)/2}}\rightarrow 0
\end{align}
as $R\rightarrow\infty$.

Then, if $(\gamma^{(k)})_{k\geq 1}$ is any (A)--(D)-admissible solution to (\ref{label_1}) defined on an interval $I\subset\mathbb{R}$ with $\gamma_0^{(1)}(x,x')$ radial in $x$ and $x'$ (in the sense that $\gamma_0^{(1)}(x,x')=\gamma_0^{(1)}(|x|,|x'|)$) and $\gamma_0^{(2)}(x,y,x',y')$ radial in each of $x$, $x'$, $y$ and $y'$, then the condition 
\begin{align*}
E(0)<0
\end{align*}
implies $I$ is bounded, where $E(t)$ is as defined in (\ref{label_4}).
\end{theorem}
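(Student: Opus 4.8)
The plan is to adapt the Ogawa--Tsutsumi localization of Glassey's argument to the hierarchy, using the quantum de Finetti theorem to reduce the error terms produced by the localization to one-particle estimates governed by the radial Sobolev inequality.

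First I would choose a smooth radial $\varphi\colon\mathbb{R}^d\to[0,\infty)$ with $\varphi(x)=|x|^2$ on $|x|\le1$, radially nondecreasing and bounded on $|x|\ge1$, and with $\partial_r^2\varphi\le2$, $0\le\partial_r\varphi\le2r$, $|\Delta^2\varphi|\le C$; setting $\varphi_R(x)=R^2\varphi(x/R)$ gives $\varphi_R(x)=|x|^2$ and $\nabla\varphi_R(x)=2x$ on $|x|\le R$, while $|\nabla\varphi_R|\lesssim R$, $|\Delta^2\varphi_R|\lesssim R^{-2}$, and $\partial_r^2\varphi_R\le2$. Put $V_{1,R}(t)=\Tr(\varphi_R\gamma^{(1)}(t))\ge0$, which is well defined for $H^2$ data even in the absence of finite variance. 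Applying the localized virial identity established earlier in the paper and decomposing $\varphi_R=|x|^2+(\varphi_R-|x|^2)$, the terms carrying the unmodified weight reproduce precisely the quantity behind Theorem~\ref{label_5}, namely $16E_1+2\Tr\big((2V(x_1-x_2)+(x_1-x_2)\cdot\nabla V(x_1-x_2))\gamma^{(2)}\big)$, which is $\le16E_1$ because $\gamma^{(2)}\succeq0$ has nonnegative diagonal and $V\ge0$, $V+\tfrac12 x\cdot\nabla V\le0$; the correction to the kinetic part is supported on $\{|x_1|\ge R\}$ and is $\le0$ since $\partial_r^2\varphi_R\le2$; the $\Delta^2\varphi_R$-term is $\lesssim R^{-2}\Tr\gamma^{(1)}=R^{-2}$; and there remains a nonlinear localization error $\mathcal{E}_R(t)$, produced by replacing $x_1-x_2$ with $\tfrac12(\nabla\varphi_R(x_1)-\nabla\varphi_R(x_2))$ in the interaction term, supported on $\{|x_1|\ge R\}\cup\{|x_2|\ge R\}$. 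Hence
\begin{align*}
\frac{d^2}{dt^2}V_{1,R}(t)\ \le\ 16E_1+CR^{-2}+\mathcal{E}_R(t).
\end{align*}

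The core of the argument is the estimate of $\mathcal{E}_R(t)$, and this is where the Hudson--Moody quantum de Finetti theorem enters: as $(\gamma^{(k)})$ is (A)--(D)-admissible, there is a Borel probability measure $d\mu_t$ on the unit sphere of $L^2$ with $\gamma^{(k)}(t,\mathbf{x},\mathbf{x}')=\int\prod_j\phi(x_j)\overline{\phi(x_j')}\,d\mu_t(\phi)$, and, propagating the radiality of $\gamma_0^{(1)},\gamma_0^{(2)}$, we may take $\mu_t$ to see only radial profiles. This reduces $|\mathcal{E}_R(t)|$ to bounding, uniformly over radial $\phi$ with $\|\phi\|_{L^2}=1$, integrals of the form $\int\!\!\int_{|x_1|\ge R}|\nabla\varphi_R(x_1)-2x_1|\,|\nabla V(x_1-x_2)|\,|\phi(x_1)|^2|\phi(x_2)|^2\,dx_1\,dx_2$ and their $x_2$-analogues. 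Here $|\nabla\varphi_R(x_1)-2x_1|\le2|x_1|\le2(|x_1-x_2|+|x_2|)$, so one splits according to whether $|x_1-x_2|$ is at least or less than $R^{1/2}$ — on the latter region $|x_1|\ge R$ forces $|x_2|\ge R/2$ — and applies the radial Sobolev inequality $|\phi(x)|^2\le C|x|^{-(N-1)}\|\phi\|_{L^2}\|\nabla\phi\|_{L^2}$ to the factor at the large argument. Hypotheses $(\ref{label_8})$ and $(\ref{label_9})$ then yield a bound $\epsilon(R)(1+\|\nabla\phi\|_{L^2})$ with $\epsilon(R)\to0$; integrating against $d\mu_t$, using Cauchy--Schwarz and $\int\|\nabla\phi\|_{L^2}^2\,d\mu_t=-\Tr(\Delta_{x_1}\gamma^{(1)}(t))$, gives $|\mathcal{E}_R(t)|\lesssim\epsilon(R)\big(1+(-\Tr(\Delta_{x_1}\gamma^{(1)}(t)))^{1/2}\big)$. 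The carefully chosen truncation lemma is what makes these manipulations rigorous for general $H^2$ densities and, above all, allows the kinetic-energy factor in $\mathcal{E}_R$ to be reabsorbed in the last step.

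Finally, suppose for contradiction that $I$ is unbounded, say $[0,\infty)\subset I$; then $V_{1,R}\in C^2([0,\infty))$ and $V_{1,R}\ge0$. Combining the differential inequality above with the identity $8\big(-\Tr(\Delta_{x_1}\gamma^{(1)})\big)=16E_1+4\Tr(B_{1,2}^+\gamma^{(2)})$ from $(\ref{label_4})$, the sign condition $(\ref{label_7})$, and the truncation lemma (through which the error $\epsilon(R)(-\Tr(\Delta_{x_1}\gamma^{(1)}))^{1/2}$ is absorbed, the strict negativity $-E_1>0$ being used here), one finds that for $R$ large enough $\tfrac{d^2}{dt^2}V_{1,R}(t)\le8E_1<0$ for all $t\ge0$; two integrations then force $V_{1,R}(t)\to-\infty$, contradicting $V_{1,R}\ge0$, so $I$ is bounded. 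I expect the principal obstacle to be the estimate of $\mathcal{E}_R$ on the region where $|x_1-x_2|$ is small but $|x_1|$, and therefore $|x_2|$, are large: there $\nabla V$ is evaluated near the origin and the weight $|x_1|$ is not small, so the $r^{-2}$-type decay of $V$ forced by $(\ref{label_7})$, the radial decay of the profiles, and the precise scaling of the $L^1$-quantity in $(\ref{label_9})$ must be balanced against one another; closely tied to this is the delicate reabsorption of the kinetic-energy factor, which is exactly where the choice of truncation matters most.
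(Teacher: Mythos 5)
Your outline follows the paper's strategy (localized virial identity, quantum de Finetti reduction to radial profiles, Strauss-type radial estimates, hypotheses (\ref{label_8})--(\ref{label_9})), but the step where you control the nonlinear localization error $\mathcal{E}_R$ has a genuine gap, and it is exactly the delicate point of the theorem. You discard the localized kinetic correction (keeping only that it is $\leq 0$) and discard the interaction term via (\ref{label_7}), and you then bound $|\mathcal{E}_R(t)|\lesssim \epsilon(R)\bigl(1+(-\Tr(\Delta_{x_1}\gamma^{(1)}(t)))^{1/2}\bigr)$, i.e.\ by the \emph{full} kinetic energy. That quantity is time-dependent and is not controlled uniformly in $t$ in the focusing regime (indeed, proving blowup means it may diverge), so ``choose $R$ large'' does not make the right-hand side uniformly negative. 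The identity $8(-\Tr(\Delta_{x_1}\gamma^{(1)}))=16E_1+4\Tr(B_{1,2}^+\gamma^{(2)})$ does not rescue the argument: after you have used (\ref{label_7}) to drop the interaction term there is no negative term left on the right-hand side into which $\delta\Tr(B_{1,2}^+\gamma^{(2)})$ could be reabsorbed, and (\ref{label_7}) gives no quantitative bound of the form $V+\tfrac12 x\cdot\nabla V\leq -cV$ that would allow it. So the ``reabsorption of the kinetic-energy factor'' you invoke is asserted rather than proved, and as stated it fails.

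The paper closes precisely this gap by arranging that the error is controlled by the \emph{localized} kinetic term rather than the full one. In Lemma \ref{label_43} the negative term $(II)=-8\int\bigl(F_R(|x|)+\tfrac{2|x|^2}{R}\rho(\tfrac{|x|^2}{R})\bigr)(\partial_{r,r'}\gamma^{(1)})(x,x)\,dx$ is retained, and the truncation Lemma \ref{label_18} is built so that on the region $\max\{|x|,|y|\}\geq R^{1/2}$, $|x-y|\leq R^{1/2}$ the vector $a(x,y)$ is bounded by $|x-y|$ times \emph{the same weight} $F_R(|x|)+\tfrac{|x|^2}{R}\rho(\tfrac{|x|^2}{R})$; then the weighted form of the Strauss lemma (applied with $f=(F_R(|x|)+\tfrac{|x|^2}{R}\rho(\tfrac{|x|^2}{R}))^{1/2}$, not the plain radial Sobolev bound you use) produces $\lVert f\,\nabla\phi\rVert_{L^2}^2$, so that after integrating in $d\mu(\phi)$ the error $(IIIb)$ is bounded by $4|E_1|+\tfrac12|(II)|$ once (\ref{label_9}) makes the prefactor $A_R/R^{(N-1)/2}$ small. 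Since $(II)\leq 0$, the half of $(II)$ absorbs the error and one lands on $\partial_{tt}\Tr(\psi_R\gamma^{(1)})\leq 4E_1<0$ uniformly in $t$. To repair your argument you would need to (i) keep your kinetic correction term explicitly, (ii) replace the plain bound $|\phi(x)|^2\lesssim |x|^{-(N-1)}\lVert\phi\rVert_{L^2}\lVert\nabla\phi\rVert_{L^2}$ by a weighted version localized to $|x|\gtrsim R^{1/2}$ matching that correction term, and (iii) check, as in Lemma \ref{label_18}, that the difference $x-y-\tfrac12(\nabla\varphi_R(x)-\nabla\varphi_R(y))$ carries exactly that weight on the relevant region; your crude bound $|\nabla\varphi_R(x_1)-2x_1|\leq 2|x_1|$ is too lossy for this.
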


As we briefly described above, the proof of Theorem $\ref{label_6}$ is based on a localized form of the virial identities used to prove Theorem $\ref{label_5}$.  We establish these identities in Proposition $\ref{label_34}$, with some additional algebraic tools useful in our arguments in the subsequent Lemma $\ref{label_40}$ and Lemma $\ref{label_43}$.  

The analysis leading to the proof of this result consists in the application of essentially three main ingredients:
\begin{enumerate}
\item the formulation of an appropriate form of a radial truncation lemma, Lemma $\ref{label_18}$; this truncation lemma allows for the additional terms appearing in the localized identities to be controlled by the conserved energy,
\item the quantum de Finetti theorem of \cite{HM} (see \cite{LNR}, \cite{CHPS,CHPS2}, and the references cited therein, for earlier applications of this theorem in the hierarchy context); this provides the algebraic structure necessary to estimate the ``decoupled'' nonlinearity via standard convolution estimates, and
\item decay properties of the functions $\gamma^{(k)}$ arising from the Strauss lemma, ensured by our radiality assumption.
\end{enumerate}

\hspace{0.2in}

We conclude this introduction with a brief outline of the rest of the paper.  In Sections $2$ and $3$, we establish some further notational conventions, establish conservation of mass and energy for ($\ref{label_1}$) (note that similar conservation laws were obtained by Chen, Pavlovic and Tzirakis in \cite{CPT} for the related Gross-Pitaevskii hierarchy).  Section $4$ is then devoted to the derivation of the relevant virial and localized virial identities which form the basis of our subsequent analysis.  The proofs of Theorem $\ref{label_5}$ and Theorem $\ref{label_6}$ are then given in Section $5$.  Some auxiliary technical results are established in the appendices.

\vspace{0.2in}

\subsection*{Acknowledgements}

The author would like to thank T. Chen and N. Pavlovic for valuable conversations concerning the Gross-Pitaevskii and Hartree hierarchies.  The author was partially supported by NSF grants DMS-1361838 and DMS-1748083 during the preparation of this work.

\section{Preliminaries}

In the rest of this paper, we let $V\in\mathcal{S}(\mathbb{R}^d;\mathbb{R})$ be a smooth bounded Schwartz-class potential with even symmetry.  We begin by collecting some preliminary results concerning ($\ref{label_1}$).   The first of these is a result expressing local well-posedness of the evolution in an appropriate function space.  Following \cite{CP_Cauchy,CPHigher} (see also the references cited in these works for related background), the relevant function spaces are parametrized both by a regularity parameter (often denoted $\alpha$, which we will take at the level $\alpha=1$ in the discussion below) and a scaling parameter $\xi$.  The interplay between these parameters plays a key role in the analysis.

For $(\gamma^{(k)})_{k\geq 1}$ such that $\gamma^{(k)}:\mathbb{R}^{dk}\times\mathbb{R}^{dk}\rightarrow \mathbb{C}$ for $k\geq 1$, and $\xi\in (0,\infty)$, define
\begin{align*}
\lVert (\gamma^{(k)})\rVert_{H^1_{\xi}}&:=\sum_{k\geq 1} \xi^k\left(\Tr(|\langle\nabla_x\rangle\langle\nabla_{x'}\rangle\gamma^{(k)}|^2)\right)^{1/2}.
\end{align*}

\begin{proposition}
\label{label_prop1}
Suppose that $(\gamma_0^{(k)})_{k\geq 1}$ is a sequence of functions with $\gamma_0^{(k)}:\mathbb{R}^{dk}\times\mathbb{R}^{dk}\rightarrow\mathbb{C}$ for $k\geq 1$ which satisfies conditions (A)--(D).  If $(\gamma_0^{(k)})\in H^1_{\xi}$ for some $\xi>0$, then for every $\xi'$ sufficiently small there exists $T>0$ so that (\ref{label_1}) with initial data $(\gamma_0^{(k)})$ has a unique solution in $L_t^\infty([0,T];H^1_{\xi'})\cap L_t^1([0,T];H^1_{\xi'})$.  Moreover, the solution satisfies conditions (A)--(D) as well.
\end{proposition}

The proof of Proposition \ref{label_prop1} is analogous to the argument given in \cite{CP_Cauchy} (see also \cite{CPHigher}), and is based on a fixed point argument proceeding from the integral form of the equation.  The preservation of the conditions (A)--(D) also follows from this argument and carries over identically; note that in the case of the positivity property (A) this is a delicate matter, and relies on the quantum de Finetti and uniqueness results for the relevant hierarchy (as we noted earlier, see \cite{ChTa} for the case of the GP hierarchy).

We next recall several implications of the admissibility conditions (A)--(D), which express symmetry properties of the initial data (and, in view of the uniqueness claim above, of solutions).  We begin with an identity from \cite{CPT} (see, e.g. (4.10)--(4.11) in \cite{CPT}): fix $d\geq 1$ and let $A:(x_1,x'_1)\mapsto A(x_1,x'_1)\in\mathcal{S}(\mathbb{R}^d\times\mathbb{R}^d)$ be given; then
\begin{align}
\label{label_11}\Tr(\Delta_{x_1}A)=\Tr(\Delta_{x'_1}A)=\Tr(-\nabla_{x_1}\cdot \nabla_{x'_1}A).
\end{align}

We also recall, as a basic consequence of the Hermitian property of (A)--(D)-admissible solutions, for $1\leq i\leq d$ we have the identities 
\begin{align}
(\partial_{x_i}\gamma^{(1)})(x,x)=\overline{(\partial_{x'_i}\gamma^{(1)})(x,x)},\label{label_35}
\end{align}
\begin{align}
(\partial_{x_i}\partial_{x_j}\gamma^{(1)})(x,x)=\overline{(\partial_{x'_i}\partial_{x'_j}\gamma^{(1)})(x,x)},\label{label_36}
\end{align}
and
\begin{align}
(\partial_{x_i}\partial_{x'_j}\gamma^{(1)})(x,x)=\overline{(\partial_{x'_i}\partial_{x_j}\gamma^{(1)})(x,x)},\label{label_37}
\end{align}
for $x\in\mathbb{R}^d$.  As a consequence, the identities 
\begin{align}
(\nabla_{x_1}\gamma^{(1)})(x,x)=\nabla_{x_1}[\overline{\gamma^{(1)}(x'_1,x_1)}]_{x_1=x'_1=x}=\overline{(\nabla_{x'_1}\gamma^{(1)})(x,x)}
\end{align}
and
\begin{align}
(\Delta_{x_1}\gamma^{(1)})(x,x)=\Delta_{x_1}[\overline{\gamma^{(1)}(x'_1,x_1)}]|_{x_1=x_1'=x}=\overline{(\Delta_{x'_1}\gamma^{(1)})(x,x)}\label{label_a1}
\end{align}
also hold.

In this context, we several times shall use the observation that $(\gamma^{(k)})\succeq 0$ and $(\gamma^{(k)})$ Hermitian together imply
\begin{align}
\gamma^{(1)}(x,x)\geq 0\quad\textrm{and}\quad \gamma^{(2)}(x,y,x,y)\geq 0\label{label_50}
\end{align}
for all $x,y\in\mathbb{R}^d$.

Associated to these observations, in the proof of Theorem $1.3$, we will make essential use of the quantum de Finetti theorem of Hudson and Moody \cite{HM}, as formulated in \cite{LNR} and \cite{CHPS}.  This result was used by Chen, Hainzl, Pavlovi\'c, and Seiringer in \cite{CHPS} and \cite{CHPS2} as an essential tool for studying uniqueness of solutions to the GP hierarchy.  Applying the theorem (e.g. as stated in \cite[Theorem $2.1$]{CHPS}) with the base Hilbert space taken as $H=L^2_{\rad}\subset L^2(\mathbb{R}^{d})$, the subspace of $L^2$ consisting of radial functions, this gives the existence of a Borel measure $\mu$ on $L_{\rad}^2$, supported on $\{f\in L_{\rad}^2:\lVert f\rVert_{L^2}=1\}$, with $\mu(L_{\rad}^2)=1$, and such that both
\begin{align}
\gamma^{(1)}(x,x)=\int |\phi(x)|^2d\mu(\phi)\label{label_48}
\end{align}
and
\begin{align}
\gamma^{(2)}(x,y,x,y)=\int |\phi(x)|^2|\phi(y)|^2d\mu(\phi)\label{label_49}
\end{align}
hold in the sense of distributions.

\section{A momentum identity and mass conservation}
\label{label_10}

In this section, we collect two preliminary conservation properties of (\ref{label_1}).  The first is the identification of a quantity
analogous to the momentum in a nonlinear Schr\"odinger equation and which obeys a pointwise conservation law, at least when the
solution is smooth.  A similar expression was identified for the GP hierarchy in \cite{CPT}.

\begin{lemma}
\label{label_26}
Let $(\gamma^{(k)})_{k\geq 1}$ be a smooth (A)--(D)-admissible solution to (\ref{label_1}) and define $P:\mathbb{R}\times\mathbb{R}^d\rightarrow\mathbb{C}^d$ by
\begin{align}
P(t,x)&:=\int e^{ix\cdot(\xi-\xi')}(\xi+\xi')\widehat{\gamma^{(1)}}(\xi,\xi')d\xi d\xi'\label{label_27}
\end{align}
for $t\in\mathbb{R}$ and $x\in\mathbb{R}^d$.  We then have
\begin{align*}
\partial_t \gamma^{(1)}(t,x,x)+\divop_x P(t,x)=0
\end{align*}
for all $(t,x)\in\mathbb{R}\times\mathbb{R}^d$.
\end{lemma}

\begin{proof}
Proceeding by direct computation, note that after taking the Fourier transform and its inverse (and using that $(\gamma^{(k)})$ solves ($\ref{label_1}$)) we obtain
\begin{align}
\nonumber \partial_t \gamma^{(1)}(x,x)&=\int e^{ix\cdot (\xi-\xi')}\partial_t \widehat{\gamma^{(1)}}(\xi,\xi')d\xi d\xi'\\
&=i\int e^{ix\cdot (\xi-\xi')}(|\xi'|^2-|\xi|^2)\widehat{\gamma^{(1)}}(\xi,\xi')d\xi d\xi'-\mu i(B^\pm_{1,2}\gamma^{(2)})(x,x).\label{label_28}
\end{align}

Now, writing $\partial_{x_i}e^{ix\cdot(\xi-\xi')}=i(\xi_i-\xi_i')e^{ix\cdot(\xi-\xi')}$ for $1\leq i\leq d$, we get
\begin{align*}
(\ref{label_28})&=-\divop_x\int e^{ix\cdot (\xi-\xi')}(\xi'+\xi)\widehat{\gamma^{(1)}}(\xi,\xi')d\xi d\xi'\\
&\hspace{0.2in}-\mu i\int \gamma^{(2)}(x,y,x',y)(V(x-y)-V(x'-y))\bigg|_{x'=x}dy\\
&=-\divop_x P,
\end{align*}
as desired.
\end{proof}

Using this result, we next establish conservation of mass for the hierarchy, that is, we show that the quantity $\Tr(\gamma^{(1)})$ remains invariant under the evolution.

\begin{proposition}[Conservation of mass for (\ref{label_1})]
\label{label_12}
Suppose that $(\gamma^{(k)})_{k\geq 1}$ is an (A)--(D)-admissible solution to the Hartree hierarchy (\ref{label_1}).  Then we have
\begin{align*}
\partial_t\Tr(\gamma^{(1)}(t))=0.
\end{align*}
\end{proposition}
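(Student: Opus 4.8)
The plan is to differentiate $\Tr(\gamma^{(1)}(t)) = \int \gamma^{(1)}(t,x_1,x_1)\,dx_1$ in time, substitute the $k=1$ equation of the hierarchy, and show that both the linear (Laplacian) contribution and the nonlinear (collision) contribution vanish upon taking the trace. First I would write
\begin{align*}
\partial_t\Tr(\gamma^{(1)}(t)) = \int (\partial_t\gamma^{(1)})(t,x_1,x_1)\,dx_1,
\end{align*}
justified by dominated convergence using $\gamma^{(1)}\in C(I;H^2)$ together with the equation, which expresses $\partial_t\gamma^{(1)}$ in terms of $\Delta^{(1)}_\pm\gamma^{(1)}$ (an $L^2$-in-space, continuous-in-time quantity) and $B_{1,2}^\pm\gamma^{(2)}$. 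From the $k=1$ case of (\ref{label_1}), $\partial_t\gamma^{(1)} = -i\Delta^{(1)}_\pm\gamma^{(1)} - i\mu B_{1,2}^\pm\gamma^{(2)}$, so it suffices to check $\Tr(\Delta^{(1)}_\pm\gamma^{(1)}) = 0$ and $\Tr(B_{1,2}^\pm\gamma^{(2)}) = 0$ separately.

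For the linear term, $\Delta^{(1)}_\pm\gamma^{(1)} = \Delta_{x_1}\gamma^{(1)} - \Delta_{x_1'}\gamma^{(1)}$, and by the identity (\ref{label_11}) quoted from \cite{CPT} we have $\Tr(\Delta_{x_1}\gamma^{(1)}) = \Tr(\Delta_{x_1'}\gamma^{(1)})$, so the linear contribution cancels. (Alternatively, one can use the Hermitian property (C): $\Tr(\Delta_{x_1}\gamma^{(1)}) = \overline{\Tr(\Delta_{x_1'}\gamma^{(1)})}$, and combine with the reality that follows from the same symmetry.) For the nonlinear term, I would compute
\begin{align*}
\Tr(B_{1,2}^+\gamma^{(2)}) = \int\!\!\int \gamma^{(2)}(x_1,y,x_1,y)V(x_1-y)\,dy\,dx_1,
\end{align*}
and similarly $\Tr(B_{1,2}^-\gamma^{(2)}) = \int\!\!\int \gamma^{(2)}(x_1,y,x_1,y)V(x_1-y)\,dy\,dx_1$; since $B_{1,2}^\pm = B_{1,2}^+ - B_{1,2}^-$ and when the two copies of $x_1$ are identified the kernels $V(x_1-y)$ and $V(x_1'-y)$ coincide, the difference vanishes identically. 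Thus $\Tr(B_{1,2}^\pm\gamma^{(2)}) = 0$.

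Combining these two facts with the differentiation-under-the-integral step gives $\partial_t\Tr(\gamma^{(1)}(t)) = 0$. The main obstacle is not any deep estimate but rather the justification of passing $\partial_t$ inside the trace integral and the verification that the relevant integrals are absolutely convergent: here one uses the admissibility hypotheses—$\gamma^{(1)}\in C(I;H^2(\mathbb{R}^d\times\mathbb{R}^d))$ gives the regularity and decay needed to make sense of the trace of $\Delta^{(1)}_\pm\gamma^{(1)}$ (e.g. via the Fourier-side expression and the $H^2$ bound), and for the collision term one needs a mild integrability assumption on $V$ (for instance $V\in L^\infty$ or an appropriate $L^p$ condition), together with $\gamma^{(2)}\in H^2$ to control $\int\gamma^{(2)}(x_1,y,x_1,y)\,dy$, so that $B_{1,2}^+\gamma^{(2)}$ defines an integrable function whose trace is finite. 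Once these technical points are in place, the cancellations are purely algebraic, exactly as in the analogous computation for the Gross–Pitaevskii hierarchy in \cite{CPT}.
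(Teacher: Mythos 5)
Your proposal follows essentially the same route as the paper: differentiate under the trace, substitute the $k=1$ equation, cancel the Laplacian contribution via the identity (\ref{label_11}), and observe that $\Tr(B^{\pm}_{1,2}\gamma^{(2)})$ vanishes because $V(x_1-y)$ and $V(x_1'-y)$ coincide on the diagonal $x_1'=x_1$. The only slip is the sign in $\partial_t\gamma^{(1)}=i\Delta^{(1)}_\pm\gamma^{(1)}-i\mu B^{\pm}_{1,2}\gamma^{(2)}$ (you wrote $-i$ on the Laplacian term), which is inconsequential here since both traces vanish separately.
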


\begin{proof}
As a consequence of the local theory stated above and standard approximation arguments, it suffices to show the result under the assumption that $\gamma^{(k)}\in C_t(I;\mathcal{S}(\mathbb{R}^{dk}\times\mathbb{R}^{dk}))$ for all $k\geq 1$.  By $(\ref{label_1})$, we get
\begin{align*}
\partial_t \Tr(\gamma^{(1)})&=\int \partial_t\gamma^{(1)}(x,x)dx\\
&=i\int (\Delta_{x_1}\gamma^{(1)})(x,x)-(\Delta_{x'_1}\gamma^{(1)})(x,x)dx-\mu i\int B_{1,2}^{\pm}\gamma^{(2)}(x,x)dx.
\end{align*}
Now, invoking ($\ref{label_a1}$) and ($\ref{label_36}$) to see that the first integral vanishes, and writing out the definition of the operator $B_{1,2}^\pm$, the right side of the above equality is equal to
\begin{align*}
-\mu i\int \gamma^{(2)}(x,y,x,y)\Big[V(x-y)-V(x'-y)\Big]\bigg|_{(x,x')=(x,x)}dydx=0,
\end{align*}
as desired.
\end{proof}

\section{Conservation of Energy}

We now establish a conservation law for the energy functional $E(t)$.

\begin{proposition}[Conservation of energy for (\ref{label_1})]
\label{label_13}
Suppose that $(\gamma^{(k)})_{k\geq 1}$ is an (A)--(D)-admissible solution to the Hartree hierarchy (\ref{label_1}).  Then 
\begin{align*}
\partial_t E(t)=0,
\end{align*}
where $E(t)$ denotes the quantity defined in (\ref{label_4}).
\end{proposition}

\begin{proof}
As in the proof of Proposition $2.3$, by standard approximation arguments it suffices to show the result when each $\gamma^{(k)}$ belongs to the class $C_t(I;\mathcal{S}(\mathbb{R}^{dk}\times\mathbb{R}^{dk}))$.  Recalling the definition of $E(t)$, and using that the sequence $(\gamma^{(k)})$ is a solution to the hierarchy $(\ref{label_1})$, we have
\begin{align}
\nonumber &\partial_tE(t)=\frac{i}{2}\Tr\bigg(-\Delta_{x_1}\bigg[\Delta_{x_1}\gamma^{(1)}-\Delta_{x'_1}\gamma^{(1)}-\mu B^{\pm}_{1,2}\gamma^{(2)}\bigg]\bigg)\\
\label{label_14}&\hspace{0.4in}+\frac{\mu i}{4}\Tr\bigg(B^+_{1,2}\bigg[\Delta^{(2)}_{\pm}\gamma^{(2)}-\mu \sum_{j=1}^2 B^{\pm}_{j,3}\gamma^{(3)}\bigg]\bigg).
\end{align}

We now observe that, taking the Fourier transform and its inverse, approximating $e^{ix_1\cdot(\xi_1-\xi'_1)}$ by $e^{-\delta|x_1|^2+ix_1\cdot(\xi_1-\xi'_1)}$ for $\delta>0$, applying Fubini's theorem, and taking limits as $\delta\rightarrow 0$ (by dominated convergence), one has the identity
\begin{align*}
&\Tr\bigg(-\Delta_{x_1}\bigg[\Delta_{x_1}\gamma^{(1)}(t)-\Delta_{x'_1}\gamma^{(1)}(t)\bigg]\bigg)\\
&\hspace{0.6in}=\lim_{\delta\rightarrow 0}\int e^{-\delta|x_1|^2+ix_1\cdot (\xi_1-\xi'_1)}|\xi_1|^2(|\xi'_1|^2-|\xi_1|^2)\widehat{\gamma^{(1)}}(\xi_1,\xi'_1)d\xi_1d\xi'_1dx_1\\
&\hspace{0.6in}=\int |\xi_1|^2(|\xi_1|^2-|\xi_1|^2)\widehat{\gamma^{(1)}}(\xi_1,\xi_1)d\xi_1=0,
\end{align*}
while expansion of the definitions of the operators $B_{1,2}^+$ and $B_{j,3}^\pm$ gives
\begin{align*}
&\Tr\bigg(B^+_{1,2}\bigg[\sum_{j=1}^2 B^{\pm}_{j,3}\gamma^{(3)}\bigg]\bigg)\\
&\hspace{0.2in}=\int\int \bigg[\sum_{j=1}^2 \int \gamma^{(3)}(x_1,x_2,z)(V(x_j-z)-V(x_j-z))dz\bigg]V(x_1-x_2)dx_2dx_1\\
&\hspace{0.2in}=0
\end{align*}
as well.

We therefore obtain that $\partial_tE(t)$ is equal to
\begin{align}
\nonumber &\frac{\mu i}{2}\Tr\bigg(\Delta_{x_1}\bigg[\int \gamma^{(2)}(x_1,y,x'_1,y)(V(x_1-y)-V(x'_1-y))dy\bigg]\bigg)\\
\nonumber &\hspace{0.2in}+\frac{\mu i}{4}\Tr\bigg(\int [(\Delta_{x_1}\gamma^{(2)})(x_1,y,x_1,y)-(\Delta_{x'_1}\gamma^{(2)})(x_1,y,x'_1,y)]V(x_1-y)dy\bigg)\\
&\hspace{0.2in}+\frac{\mu i}{4}\Tr\bigg(\int [(\Delta_{x_2}\gamma^{(2)})(x_1,y,x_1,y)-(\Delta_{x'_2}\gamma^{(2)})(x_1,y,x'_1,y)]V(x_1-y)dy\bigg).\label{label_15}
\end{align}

Now, using ($\ref{label_11}$),
\begin{align*}
&\frac{\mu i}{2}\Tr\bigg(\Delta_{x_1}\bigg[\int \gamma^{(2)}(x_1,y,x'_1,y)(V(x_1-y)\bigg]dy\bigg)\\
&\hspace{0.2in}=\frac{\mu i}{2}\Tr\bigg(\Delta_{x'_1}\bigg[\int \gamma^{(2)}(x_1,y,x'_1,y)(V(x_1-y)\bigg]dy\bigg)\\
&\hspace{0.2in}=\frac{\mu i}{2}\Tr\bigg(\int (\Delta_{x'_1}\gamma^{(2)})(x_1,y,x'_1,y)V(x_1-y)dy\bigg)
\end{align*}
while also
\begin{align*}
&\frac{\mu i}{2}\Tr\bigg(\Delta_{x_1}\bigg[\int \gamma^{(2)}(x_1,y,x'_1,y)(V(x'_1-y)\bigg]dy\bigg)\\
&\hspace{0.2in}=\frac{\mu i}{2}\Tr\bigg(\int (\Delta_{x_1}\gamma^{(2)})(x_1,y,x'_1,y)V(x'_1-y)dy\bigg).
\end{align*}
Combining this with the observation that symmetry of $\gamma^{(2)}$ with respect to permutations of the variables allows one to simplify the last two lines of (\ref{label_15}), we get
\begin{align*}
(\ref{label_15})&=\frac{\mu i}{2}\Tr\bigg(\int (\Delta_{x'_1}\gamma^{(2)})(x_1,y,x'_1,y)V(x_1-y)dy\bigg)\\
&\hspace{0.2in}-\frac{\mu i}{2}\Tr\bigg(\int (\Delta_{x_1}\gamma^{(2)})(x_1,y,x'_1,y)V(x'_1-y)dy\bigg)\\
&\hspace{0.2in}+\frac{\mu i}{2}\Tr\bigg(\int (\Delta_{x_1}\gamma^{(2)})(x_1,y,x'_1,y)V(x_1-y)dy\bigg)\\
&\hspace{0.2in}-\frac{\mu i}{2}\Tr\bigg(\int (\Delta_{x'_1}\gamma^{(2)})(x_1,y,x'_1,y)V(x_1-y)dy\bigg)\\
&=0.
\end{align*}
This completes the proof of the proposition.
\end{proof}

\begin{remark}
By the symmetry of $\gamma^{(k)}$, $k\geq 1$ with respect to permutations of the variables, Proposition $\ref{label_13}$ immediately implies that the quantities 
\begin{align*}
E_k(t)&:=\frac{1}{2}\Tr\bigg(\sum_{j=1}^k -\Delta_{x_j}\gamma^{(k)}(t)\bigg)+\frac{\mu}{4}\Tr\bigg(\sum_{j=1}^k B_{j,k+1}^+\gamma^{(k+1)}(t)\bigg)\\
&=k\bigg(\frac{1}{2}\Tr(\gamma^{(1)}(t))+\frac{\mu}{4}\Tr(B^+_{1,2}\gamma^{(2)}(t))\bigg),\quad k\geq 2,
\end{align*}
are also conserved.  A similar family of conserved quantities was observed in \cite{CPT} for the Gross-Pitaevskii hierarchy.
\end{remark}

\section{Virial and localized virial identities}

We next establish suitable forms of virial identities (and their localizations) for the hierarchy (\ref{label_1}).  These will ultimately be a key tool used to prove the finite-time blow-up results expressed in Theorem \ref{label_5} and Theorem \ref{label_6}.  The general procedure for the derivations follows a classical approach in analogy to similar results for the nonlinear Schr\"odinger equation; we note that a particularly relevant case of the analysis was performed in \cite{CPT} for the related GP hierarchy.

We begin with the relevant virial identity.

\begin{proposition}[Virial identity for (\ref{label_1})]
\label{label_29}
Let $(\gamma^{(k)})_{k\geq 1}$ be an (A)--(D)-admissible solution to (\ref{label_1}).  Then we have the identity
\begin{align*}
\partial_{tt}\Tr(|x|^2\gamma^{(1)})=8\Tr(-\Delta_{x_1}\gamma^{(1)})-4\mu \int \gamma^{(2)}(x,y,x,y)x\cdot (\nabla V)(x-y)dxdy.
\end{align*}
\end{proposition}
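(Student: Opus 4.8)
The plan is to carry out the classical virial computation of Zakharov and Glassey directly in the hierarchy variables, differentiating
\begin{align*}
V_1(t)=\Tr(|x|^2\gamma^{(1)}(t))=\int |x|^2\gamma^{(1)}(t,x,x)\,dx
\end{align*}
twice in time. All the manipulations that follow are formal in the sense that they require enough spatial decay of $\gamma^{(1)}$ to justify integration by parts against the growing weights $|x|^2$ and $\nabla_x(|x|^2)=2x$; in the application (Theorem $\ref{label_5}$) this decay is supplied by the hypothesis $V_1(0)<\infty$, and in general it can be arranged by inserting a cutoff $\chi(x/R)$ and passing to the limit $R\to\infty$. Using Lemma $\ref{label_26}$, which gives $\partial_t\gamma^{(1)}(t,x,x)=-\divop_x P(t,x)$, together with one integration by parts, the first derivative is
\begin{align*}
\partial_t V_1(t)=-\int |x|^2\,\divop_x P(t,x)\,dx=2\int x\cdot P(t,x)\,dx,
\end{align*}
and hence $\partial_{tt}V_1(t)=2\int x\cdot\partial_t P(t,x)\,dx$, reducing matters to a computation of $\partial_t P$.

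To compute $\partial_t P$, I would differentiate the representation $(\ref{label_27})$ under the integral sign and substitute the Fourier transform of $(\ref{label_1})$ (compare $(\ref{label_28})$),
\begin{align*}
\partial_t\widehat{\gamma^{(1)}}(\xi,\xi')=i(|\xi'|^2-|\xi|^2)\widehat{\gamma^{(1)}}(\xi,\xi')-i\mu\,\widehat{B^\pm_{1,2}\gamma^{(2)}}(\xi,\xi'),
\end{align*}
which splits $\partial_t P=K+W$ into a ``kinetic'' part $K(x)=i\int e^{ix\cdot(\xi-\xi')}(\xi+\xi')(|\xi'|^2-|\xi|^2)\widehat{\gamma^{(1)}}\,d\xi\,d\xi'$ and a ``potential'' part $W(x)=-i\mu\int e^{ix\cdot(\xi-\xi')}(\xi+\xi')\widehat{B^\pm_{1,2}\gamma^{(2)}}\,d\xi\,d\xi'$. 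For $K$, I would write $x_\ell e^{ix\cdot(\xi-\xi')}=-i\partial_{\xi_\ell}e^{ix\cdot(\xi-\xi')}$, integrate by parts in $\xi_\ell$, perform the $x$-integration (producing $\delta(\xi-\xi')$), and integrate out $\xi'$ by setting $\xi'=\xi$; in the resulting expression $-\sum_\ell\big\{\partial_{\xi_\ell}\big[(\xi_\ell+\xi'_\ell)(|\xi'|^2-|\xi|^2)\widehat{\gamma^{(1)}}\big]\big\}\big|_{\xi'=\xi}$ the two summands retaining a factor $|\xi'|^2-|\xi|^2$ vanish on the diagonal, and only $-(\xi_\ell+\xi'_\ell)(2\xi_\ell)\widehat{\gamma^{(1)}}\big|_{\xi'=\xi}=-4\xi_\ell^2\widehat{\gamma^{(1)}}(\xi,\xi)$ survives, so that $\int x\cdot K(x)\,dx=4\int|\xi|^2\widehat{\gamma^{(1)}}(\xi,\xi)\,d\xi=4\Tr(-\Delta_{x_1}\gamma^{(1)})$ and $2\int x\cdot K=8\Tr(-\Delta_{x_1}\gamma^{(1)})$. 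For $W$, I would pass back to physical space: the Fourier conventions of the paper give, for any $f$, the identity $\int e^{ix\cdot(\xi-\xi')}(\xi+\xi')\widehat f(\xi,\xi')\,d\xi\,d\xi'=-i(\nabla_{x_1}-\nabla_{x'_1})f(x,x')\big|_{x'=x}$, whence $W(x)=-\mu(\nabla_{x_1}-\nabla_{x'_1})(B^\pm_{1,2}\gamma^{(2)})(x,x')\big|_{x'=x}$. Differentiating $(B^\pm_{1,2}\gamma^{(2)})(x,x')=\int \gamma^{(2)}(x,z,x',z)(V(x-z)-V(x'-z))\,dz$ in $x$ and in $x'$, the terms where a derivative lands on $\gamma^{(2)}$ drop out on the diagonal because the factor $V(x-z)-V(x'-z)$ vanishes there, leaving $W(x)=-2\mu\int \gamma^{(2)}(x,z,x,z)(\nabla V)(x-z)\,dz$ and therefore $2\int x\cdot W(x)\,dx=-4\mu\int\!\!\int \gamma^{(2)}(x,y,x,y)\,x\cdot(\nabla V)(x-y)\,dx\,dy$. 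Adding the contributions of $K$ and $W$ yields exactly the claimed identity.

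The main obstacle here is analytic rather than algebraic: making rigorous the repeated integrations by parts against the unbounded weights $|x|^2$ and $2x$ (that is, showing the boundary terms at infinity vanish) and the distributional frequency-space identities, under only the $C(I;H^2)$ regularity built into the notion of admissibility. The natural remedy is the cutoff-and-limit scheme indicated above, which is precisely the sort of localization that is made quantitative in Proposition $\ref{label_34}$; alternatively, one may restrict attention to sufficiently regular data and extend by approximation. A secondary, purely bookkeeping point that requires care is the kinetic computation: the decision to transfer the factor $x$ onto a $\xi$-derivative rather than a $\xi'$-derivative, and the verification that the off-diagonal contributions genuinely cancel once the $\delta(\xi-\xi')$ has been integrated against.
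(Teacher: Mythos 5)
Your proposal is correct, and its skeleton coincides with the paper's: both start from Lemma \ref{label_26}, integrate by parts against $|x|^2$ to get $\partial_{tt}\Tr(|x|^2\gamma^{(1)})=2\int x\cdot\partial_t P\,dx$, and then split $\partial_t P$ into a kinetic and an interaction piece using the Fourier form of (\ref{label_1}). Where you diverge is in how the two pieces are evaluated. For the kinetic term the paper factors $|\xi'|^2-|\xi|^2=(\xi+\xi')\cdot(\xi'-\xi)$, writes the result via $(\xi+\xi')\otimes(\xi+\xi')$, integrates by parts in $x$, and then identifies $\int|\xi|^2\widehat{\gamma^{(1)}}(\xi,\xi)\,d\xi$ with $\Tr(-\Delta_{x_1}\gamma^{(1)})$ through the spectral decomposition of $\gamma^{(1)}$ and Parseval; you instead integrate by parts in $\xi$ and restrict to the diagonal produced by the $x$-integration, which gives the same $8\Tr(-\Delta_{x_1}\gamma^{(1)})$ (your sign bookkeeping is consistent once the outer minus from the $\xi$-integration by parts is applied), though you simply assert the trace identification that the paper justifies via the decomposition $\gamma^{(1)}=\sum\lambda_j g_j\otimes\overline{g_j}$. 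For the interaction term your route is genuinely shorter: rather than the paper's computation of $\widehat{B^\pm_{1,2}\gamma^{(2)}}$, changes of variables, the phase functions $\Phi,\widetilde{\Phi}$, and two further integrations by parts, you use the multiplier identity $(\xi+\xi')\leftrightarrow -i(\nabla_{x_1}-\nabla_{x'_1})$ on the diagonal and observe that the terms where derivatives fall on $\gamma^{(2)}$ vanish because $V(x-z)-V(x'-z)=0$ at $x'=x$; this gives $W(x)=-2\mu\int\gamma^{(2)}(x,z,x,z)(\nabla V)(x-z)\,dz$ directly and hence the stated identity. What the paper's longer Fourier-side computation buys is that it is reused verbatim for $(II_\phi)$ in the localized identity (Proposition \ref{label_34}), whereas your physical-space shortcut is more transparent and equally applicable there. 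Your remarks on rigor (decay needed for the integrations by parts against $|x|^2$ and $2x$, cutoff-and-limit) go beyond what the paper itself addresses, which works at the same formal level under (A)--(D)-admissibility; this is a fair caveat, not a gap.
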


\begin{proof}
As we remarked in our arguments in the previous section, it suffices to show the result under the assumption that $\gamma^{(k)}$ is in $C_t([0,T];\mathcal{S}(\mathbb{R}^{dk}\times\mathbb{R}^{dk}))$ for all $k\geq 1$, as a consequence of the local well-posedness theory.  Invoking Lemma $\ref{label_26}$ and integrating by parts, we write
\begin{align}
\nonumber \partial_{tt}\Tr(|x|^2\gamma^{(1)})&=\int |x|^2\partial_{tt}\gamma(x,x)dx\\
\nonumber &=-\int |x|^2\divop_x\partial_t Pdx\\
\nonumber &=2\int x\cdot \partial_tPdx.
\end{align}
Next, expanding the definition of $P(t,x)$ and using that $(\gamma^{(k)})_{k\geq 1}$ solve ($\ref{label_1}$), this last expression becomes
\begin{align}
\nonumber &2\int e^{ix\cdot (\xi-\xi')}x\cdot(\xi+\xi')\partial_t \widehat{\gamma^{(1)}}(\xi,\xi')d\xi d\xi' dx \\
\nonumber &\hspace{0.2in}=2i\int e^{ix\cdot (\xi-\xi')}x\cdot (\xi+\xi')(|\xi'|^2-|\xi|^2)\widehat{\gamma^{(1)}}(\xi,\xi')d\xi d\xi' dx\\
&\hspace{0.4in}-2\mu i\int e^{ix\cdot (\xi-\xi')}x\cdot (\xi+\xi')\widehat{B^\pm_{1,2}\gamma^{(2)}}(\xi,\xi')d\xi d\xi' dx.\label{label_30}
\end{align}

To study the first term, we use an argument from \cite{CPT} (see in particular Section $5.2$ of \cite{CPT}).  This computation proceeds as follows: first, letting $x\otimes y$ denote the operator defined by $(x\otimes y)z=(y\cdot z)x$ for $x,y,z\in\mathbb{R}^d$, integration by parts and Fubini's theorem yield
\begin{align}
\nonumber &2i\int e^{ix\cdot (\xi-\xi')}x\cdot (\xi+\xi')(|\xi'|^2-|\xi|^2)\widehat{\gamma^{(1)}}(\xi,\xi')d\xi d\xi' dx\\
\nonumber &\hspace{0.2in}=-2i\int e^{ix\cdot (\xi-\xi')}x\cdot [(\xi+\xi')\otimes (\xi+\xi')](\xi-\xi')\widehat{\gamma^{(1)}}(\xi,\xi')d\xi d\xi' dx\\
\nonumber &\hspace{0.2in}=2\int e^{ix\cdot(\xi-\xi')}\tr[(\xi+\xi')\otimes (\xi+\xi')]\widehat{\gamma^{(1)}}(\xi,\xi')d\xi d\xi'dx\\
\label{label_31}&\hspace{0.2in}=8\int |\xi|^2\widehat{\gamma^{(1)}}(\xi,\xi)d\xi
\end{align}
where $\tr(A)=\sum_{i=1}^d (Ae_i)\cdot e_i$ is the usual matrix trace operator.  Now, using the assumption that $(\gamma^{(k)})_{k\geq 1}$ is Hermitian (since it is (A)--(D)-admissible), we may appeal to Parseval's identity to obtain
\begin{align*}
(\ref{label_31})&
=8\sum_{j=1}^\infty \int \lambda_j^2|(i\xi)\widehat{g}_j(\xi)|^2d\xi
=8\sum_{j=1}^\infty \int \lambda_j^2|\nabla g_j(x)|^2dx
=8\Tr(-\Delta_{x_1}\gamma^{(1)}),
\end{align*}
where $(\lambda_j)$ and $(g_j)$ are suitably chosen sequences.

Substituting these identities back into ($\ref{label_30}$), we obtain
\begin{align*}
(\ref{label_30})&=8\Tr(-\Delta_{x_1}\gamma^{(1)})+(II),
\end{align*}
where
\begin{align*}
(II):=-2\mu i\int e^{ix\cdot (\xi-\xi')}x\cdot (\xi+\xi')\widehat{B^\pm_{1,2}\gamma^{(2)}}(\xi,\xi')d\xi d\xi' dx.
\end{align*}

To complete the proof of the proposition, it remains to show 
\begin{align}
(II)=-4\mu \int \gamma^{(2)}(x,y,x,y)x\cdot (\nabla V)(x-y)dxdy.\label{label_32}
\end{align}

To accomplish this, we note that a direct calculation allows us to compute the Fourier transform of $B^+_{1,2}$ acting on $\gamma^{(2)}$ as
\begin{align*}
\widehat{B^{\pm}_{1,2}\gamma^{(2)}}(\xi,\xi')&=\int [\widehat{\gamma^{(2)}}(\xi-q+q',q,\xi',q')\\
&\hspace{1.2in}-\widehat{\gamma^{(2)}}(\xi,q,\xi'+q-q',q')]\widehat{V}(q-q')dq dq'.
\end{align*}
We therefore obtain
\begin{align*}
(II)&=-2\mu i\int e^{ix\cdot (\xi-\xi')}x\cdot (\xi+\xi') [\widehat{\gamma^{(2)}}(\xi-q+q',q,\xi',q')\\
&\hspace{1.2in}-\widehat{\gamma^{(2)}}(\xi,q,\xi'+q-q',q')]\widehat{V}(q-q')dqdq' d\xi d\xi' dx
\end{align*}
which, in view of the changes of variables $\xi\mapsto \xi-q+q'$ and $\xi'\mapsto \xi'-q+q'$ in the first and second terms, respectively, is equal to
\begin{align}
-2\mu i\int e^{ix\cdot (\xi-\xi')}x\cdot (2q-2q')\widehat{\gamma^{(2)}}(\xi-q+q',q,\xi',q')\widehat{V}(q-q') dq dq' d\xi d\xi' dx.\label{label_33}
\end{align}

Expanding $\widehat{\gamma^{(2)}}$ and $\widehat{V}$ in this expression by the definition of the Fourier transform, we obtain
\begin{align*}
(\ref{label_33})&=-4\mu i\int e^{i\Phi}x\cdot (q-q')\gamma^{(2)}(z,y,z',y')V(w) d\Lambda,
\end{align*}
where $\Lambda=(x,w,y,z,q,\xi,y',z',q',\xi')$ and with $\Phi=\Phi(\Lambda)$ defined by
\begin{align*}
\Phi(\Lambda):=(x-z)\cdot \xi-(x-z')\cdot \xi'+(z-y-w)\cdot q-(z-y'-w)\cdot q'.
\end{align*}

By the Fubini theorem, an approximation and limiting argument as before (first integrating in the variables $\xi$ and $z$, resulting in restriction to the set where $z=x$, and then integrating in the variables $\xi'$ and $z'$, resulting in restriction to the set where $z'=x$) shows that this is equal to 
\begin{align*}
&-4\mu \int x\cdot \nabla_x[e^{i\widetilde{\Phi}}]\gamma^{(2)}(x,y,x,y')V(w)d\widetilde{\Lambda}
\end{align*}
with $\widetilde{\Lambda}=(x,w,y,q,y',q')$ and $\widetilde{\Phi}=\widetilde{\Phi}(\widetilde{\Lambda})=(x-y-w)\cdot q-(x-y'-w)\cdot q'$.  Integrating by parts, this becomes
\begin{align*}
&4\mu d\int e^{i\widetilde{\Phi}}\gamma^{(2)}(x,y,x,y')V(w)d\widetilde{\Lambda}\\
&\hspace{0.2in}+4\mu\int e^{i\widetilde{\Phi}}x\cdot \nabla_x[\gamma^{(2)}(x,y,x,y'))]V(w)d\widetilde{\Lambda}
\end{align*}

Evaluating the integrals in $q'$, $y'$, $q$ and $w$, and recalling that $(\gamma^{(k)})$ is (A)--(D)-admissible (and thus in particular satisfies (D)), the above expression becomes
\begin{align*}
4\mu d \int B^+_{1,2}\gamma^{(2)}(x,x)dx+4\mu\int x\cdot \nabla_x[\gamma^{(2)}(x,y,x,y)]V(x-y)dxdy
\end{align*}
which, after another application of integration by parts, is equal to the desired quantity in ($\ref{label_32}$).
This completes the proof of the proposition.
\end{proof}

We next establish a localized version of Proposition $\ref{label_29}$, in which the weight $|x|^2$ is replaced with an arbitrary smooth cutoff function.

\begin{proposition}[Localized virial identity for (\ref{label_1})]
\label{label_34}
Fix $\phi\in C_c^\infty(\mathbb{R}^d)$ and let $(\gamma^{(k)})_{k\geq 1}$ be an (A)--(D)-admissible solution to (\ref{label_1}).  Then we have the identity
\begin{align*}
\partial_{tt}\Tr(\phi\gamma^{(1)})&=2\textrm{Re}\,\int H_{x}(\phi)(x)\cdot \bigg(H_{x,x'}(\gamma^{(1)})(x,x)-H_{x,x}(\gamma^{(1)})(x,x)\bigg)dx\\
&\hspace{0.4in}-2\mu \int \gamma^{(2)}(x,y,x,y)(\nabla \phi)(x)\cdot (\nabla V)(x-y)dxdy
\end{align*}
where $H_x(\phi)$, $H_{x,x}(f)$ and $H_{x,x'}(f)$ (with $f:I\times\mathbb{R}^{dk}\times \mathbb{R}^{dk}\rightarrow \mathbb{C}$) are the $d\times d$ matrices $H_x(\phi)=(\partial_{x_i}\partial_{x_j}\phi)_{i,j}$, $H_{x,x}(f)=(\partial_{x_i}\partial_{x_j}f)_{i,j}$ and $H_{x,x'}(f)=(\partial_{x_i}\partial_{x_j'}f)_{i,j}$.
\end{proposition}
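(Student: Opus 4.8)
The plan is to follow the proof of Proposition $\ref{label_29}$, now carrying the general cutoff $\phi$ in place of the weight $|x|^2$. The starting point is again Lemma $\ref{label_26}$: since $\partial_t\gamma^{(1)}(t,x,x) = -\divop_x P(t,x)$, multiplying by $\phi$ and integrating by parts in $x$ (the boundary terms vanish because $\phi\in C_c^\infty(\mathbb{R}^d)$) gives $\partial_t\Tr(\phi\gamma^{(1)}) = \int \nabla\phi(x)\cdot P(t,x)\,dx$, and differentiating once more, $\partial_{tt}\Tr(\phi\gamma^{(1)}) = \int\nabla\phi(x)\cdot\partial_t P(t,x)\,dx$. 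Expanding the definition $(\ref{label_27})$ of $P$ and substituting the $k=1$ equation of $(\ref{label_1})$ for $\partial_t\widehat{\gamma^{(1)}}$ decomposes this into a kinetic term built from $(\xi+\xi')(|\xi'|^2-|\xi|^2)\widehat{\gamma^{(1)}}(\xi,\xi')$ and a potential term built from $\widehat{B_{1,2}^\pm\gamma^{(2)}}(\xi,\xi')$; for $\phi=|x|^2$ these are precisely the two terms of $(\ref{label_30})$, since $\nabla(|x|^2)=2x$.

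For the kinetic term, I would use $|\xi'|^2-|\xi|^2 = -(\xi-\xi')\cdot(\xi+\xi')$ together with $(\xi-\xi')e^{ix\cdot(\xi-\xi')} = -i\nabla_x e^{ix\cdot(\xi-\xi')}$ and integrate by parts in $x$ exactly as in the passage to $(\ref{label_31})$; the difference from Proposition $\ref{label_29}$ is that the two $x$-derivatives now fall on $\phi$ rather than producing a trace, leaving the quadratic form $\sum_{i,j}(\partial_{x_i}\partial_{x_j}\phi)(x)(\xi+\xi')_i(\xi+\xi')_j$ paired against $e^{ix\cdot(\xi-\xi')}\widehat{\gamma^{(1)}}(\xi,\xi')$. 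Carrying out the $x$-integration and using the Fourier identities $\nabla_{x_i}\gamma^{(1)}=[(i\xi_i)\widehat{\gamma^{(1)}}]^\vee$, $\nabla_{x'_i}\gamma^{(1)}=[(-i\xi'_i)\widehat{\gamma^{(1)}}]^\vee$, the three monomial groups $\xi_i\xi_j$, $\xi_i\xi'_j$, $\xi'_i\xi'_j$ (the two mixed ones merging by symmetry of the Hessian of $\phi$) are identified with $-H_{x,x}(\gamma^{(1)})(x,x)$, $+2H_{x,x'}(\gamma^{(1)})(x,x)$, $-H_{x',x'}(\gamma^{(1)})(x,x)$, respectively. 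The Hermitian property (C) of $\gamma^{(1)}$ gives $H_{x',x'}(\gamma^{(1)})(x,x)=\overline{H_{x,x}(\gamma^{(1)})(x,x)}$ and $H_{x,x'}(\gamma^{(1)})(x,x)=\overline{H_{x,x'}(\gamma^{(1)})(x,x)}$, so that the bracket $2H_{x,x'}-H_{x,x}-H_{x',x'}$ equals $w+\overline{w}$ with $w=H_{x,x'}(\gamma^{(1)})(x,x)-H_{x,x}(\gamma^{(1)})(x,x)$; since $H_x(\phi)$ is real, this contributes exactly $2\,\textrm{Re}\int H_x(\phi)(x)\cdot(H_{x,x'}(\gamma^{(1)})(x,x)-H_{x,x}(\gamma^{(1)})(x,x))\,dx$.

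For the potential term, I would reproduce the computation of $(II)$ in the proof of Proposition $\ref{label_29}$ line for line with $\nabla\phi(x)$ replacing $x$: insert the formula for $\widehat{B_{1,2}^\pm\gamma^{(2)}}$, perform the same changes of variables (which again turn the $\xi+\xi'$ factor into $2(q-q')$), expand the Fourier transforms, and run the distributional reductions that restrict the inner variables to the diagonal. The only new feature is at the two integrations by parts in $x$: $\divop_x(\nabla\phi(x)\,g(x)) = (\Delta\phi)(x)\,g(x)+\nabla\phi(x)\cdot\nabla_x g(x)$ now produces $\Delta\phi$ in place of the constant $d$, and the second integration by parts against $\gamma^{(2)}(x,y,x,y)V(x-y)$ produces a matching $\Delta\phi$ term of opposite sign; these cancel just as the two copies of $d\int B_{1,2}^+\gamma^{(2)}(x,x)\,dx$ did in Proposition $\ref{label_29}$, and what remains is $-2\mu\int\gamma^{(2)}(x,y,x,y)(\nabla\phi)(x)\cdot(\nabla V)(x-y)\,dx\,dy$. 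Adding the kinetic and potential contributions yields the asserted identity.

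The analytic manipulations (differentiation under the integral sign, Fubini, and the integrations by parts) are unproblematic here: $\phi\in C_c^\infty(\mathbb{R}^d)$ removes all of the decay concerns present in the $|x|^2$ case, and by (A) one has $\gamma^{(1)},\gamma^{(2)}\in C(I;H^2)$. I expect the genuine work to be bookkeeping: tracking the $2\pi$-normalizations and signs when the $x$-integral is converted back into diagonal values of derivatives of $\gamma^{(1)}$, and verifying transparently that the two $\Delta\phi$ terms in the potential computation cancel. This last cancellation is the analogue of the cancellation of the $d$-terms in Proposition $\ref{label_29}$, so no new phenomenon arises.
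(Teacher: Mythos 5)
Your argument is correct and follows essentially the same route as the paper: the continuity equation of Lemma \ref{label_26} plus integration by parts against $\nabla\phi$, the Fourier-side computation identifying the kinetic term with the contraction of $H_x(\phi)$ against diagonal second derivatives of $\gamma^{(1)}$ combined with the Hermitian identities (\ref{label_36})--(\ref{label_37}), and the same distributional treatment of $\widehat{B^\pm_{1,2}\gamma^{(2)}}$ for the potential term. Your explicit verification that the two $\Delta\phi$ terms cancel is exactly the step the paper delegates to ``as in the second half of the proof of Proposition \ref{label_29},'' so nothing is missing.
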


\begin{proof}[Proof of Proposition $\ref{label_34}$]
As in the proof of Proposition $\ref{label_29}$, we may assume $\gamma^{(k)}\in C_t(I;\mathcal{S}(\mathbb{R}^{dk}\times\mathbb{R}^{dk}))$ for all $k\geq 1$, and use integration by parts to write
\begin{align}
\partial_{tt}\Tr(\phi\gamma^{(1)})&=\int \nabla_x \phi(x)\cdot \partial_t Pdx.\label{label_38}
\end{align}
where $P$ is defined as in Proposition $2.1$.  We then use the definition of $P$ to obtain
\begin{align}
\nonumber (\ref{label_38})&=\int e^{ix\cdot (\xi-\xi')}\nabla \phi(x) \cdot (\xi+\xi')\partial_t\widehat{\gamma^{(1)}}(\xi,\xi')d\xi d\xi'dx\\
\nonumber &=\int ie^{ix\cdot (\xi-\xi')}\nabla \phi(x)\cdot (\xi+\xi')(|\xi'|^2-|\xi|^2)\widehat{\gamma^{(1)}}(\xi,\xi')d\xi d\xi' dx\\
&\hspace{0.4in}-\mu i\int e^{ix\cdot (\xi-\xi')}\nabla \phi(x)\cdot (\xi+\xi')\widehat{{B^\pm_{1,2}\gamma^{(2)}}}(\xi,\xi')d\xi d\xi' dx.\label{label_39}
\end{align}

Performing a similar calculation as before on the first term, we obtain
\begin{align*}
&\int ie^{ix\cdot (\xi-\xi')}\nabla\phi(x)\cdot (\xi+\xi')(|\xi'|^2-|\xi|^2)\widehat{\gamma^{(1)}}(\xi,\xi')d\xi d\xi' dx\\
&\hspace{0.2in}=\sum_{j,k=1}^d \int (\partial_{x_k}\partial_{x_j}\phi)(x)e^{ix\cdot(\xi-\xi')}(\xi+\xi')_k(\xi+\xi')_j\widehat{\gamma^{(1)}}(\xi,\xi')d\xi d\xi' dx\\
&\hspace{0.2in}=\sum_{j,k} \int (\partial_{x_k}\partial_{x_j}\phi)(x)\bigg(-(\partial_{x_k}\partial_{x_j}\gamma^{(1)})(x,x)+(\partial_{x_k}\partial_{x'_j}\gamma^{(1)})(x,x)\\
&\hspace{0.6in}+(\partial_{x'_k}\partial_{x_j}\gamma^{(1)})(x,x)-(\partial_{x'_k}\partial_{x'_j}\gamma^{(1)})(x,x)\bigg)dx.
\end{align*}

In view of this, applying ($\ref{label_36}$) and ($\ref{label_37}$) and defining $(II_{\phi})$ by
\begin{align*}
(II_\phi):=-\mu i\int e^{ix\cdot (\xi-\xi')}\nabla \phi(x)\cdot (\xi+\xi')\widehat{{B^\pm_{1,2}\gamma^{(2)}}}(\xi,\xi')d\xi d\xi' dx,
\end{align*}
we obtain that the right side of ($\ref{label_39}$) is equal to
\begin{align*}
2\textrm{Re}\,\int H_{x}(\phi)(x)\cdot \bigg(H_{x,x'}(\gamma^{(1)})(x,x)-H_{x,x}(\gamma^{(1)})(x,x)\bigg)dx+(II_\phi).
\end{align*}

It remains to evaluate $(II_\phi)$.  As in Proposition $\ref{label_29}$, this is accomplished by a simple distributional calculation: computing $\widehat{B^\pm_{1,2}}$, expanding $\widehat{\gamma^{(2)}}$ and $\widehat{V}$ via the definition of the Fourier transform, and applying the Fubini theorem.  Indeed, this procedure of calculation gives
\begin{align*}
(II_\phi)&=-2\mu \int \gamma^{(2)}(x,y,x,y)(\nabla \phi)(x)\cdot (\nabla V)(x-y)dxdy,
\end{align*}
exactly as in the second half of the proof of Proposition $\ref{label_29}$.  This completes the proof of the proposition.
\end{proof}

To conclude this section, we include a brief lemma showing how the first term on the right side of the identity in Proposition $\ref{label_34}$ can be evaluated further, using the Hermitian property of (A)--(D)-admissible solutions.

\begin{lemma}
\label{label_40}
Fix $\phi\in C_c^\infty$, and let $(\gamma^{(k)})_{k\geq 1}$ be an (A)--(D)-admissible solution to (\ref{label_1}).  Then
\begin{align*}
&2\textrm{Re}\,\int H_{x}(\phi)\cdot H_{x,x}(\gamma^{(1)})(x,x)dx\\
&\hspace{0.2in}=\int \Delta^2(\phi)(x)\gamma^{(1)}(x,x)dx-2\textrm{Re}\,\int H_{x}(\phi)\cdot H_{x',x}(\gamma^{(1)})(x,x)dx,
\end{align*}
where $H_x(\phi)$, $H_{x,x}(f)$ and $H_{x',x}(f)$ are as defined in the statement of Lemma $\ref{label_34}$.
\end{lemma}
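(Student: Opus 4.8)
The plan is to derive the identity from a single application of the chain rule to the diagonal restriction $x\mapsto\gamma^{(1)}(x,x)$, followed by two integrations by parts and the Hermitian identities $(\ref{label_36})$ and $(\ref{label_37})$.

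First I would record the elementary observation that, writing $g(x):=\gamma^{(1)}(x,x)$, one has for $1\le i,j\le d$
\begin{align*}
(\partial_{x_i}\partial_{x_j}g)(x) = (\partial_{x_i}\partial_{x_j}\gamma^{(1)})(x,x) + (\partial_{x_i'}\partial_{x_j}\gamma^{(1)})(x,x) + (\partial_{x_i}\partial_{x_j'}\gamma^{(1)})(x,x) + (\partial_{x_i'}\partial_{x_j'}\gamma^{(1)})(x,x),
\end{align*}
that is, the Hessian of $g$ equals $H_{x,x}(\gamma^{(1)})(x,x) + H_{x',x}(\gamma^{(1)})(x,x) + H_{x,x'}(\gamma^{(1)})(x,x) + H_{x',x'}(\gamma^{(1)})(x,x)$ in matrix notation.

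Next I would contract this identity with the real, symmetric, compactly supported matrix $H_x(\phi)$, integrate over $\mathbb{R}^d$, and integrate by parts twice in the term involving the Hessian of $g$. Using $\sum_{i,j}\partial_{x_i}\partial_{x_j}(\partial_{x_i}\partial_{x_j}\phi)=\Delta^2\phi$ and $g(x)=\gamma^{(1)}(x,x)$, with all boundary contributions vanishing since $\phi\in C_c^\infty$, this yields
\begin{align*}
\int H_x(\phi)\cdot H_{x,x}(\gamma^{(1)})(x,x)\, dx &= \int \Delta^2(\phi)(x)\gamma^{(1)}(x,x)\, dx\\
&\quad - \int H_x(\phi)\cdot\bigl(H_{x',x} + H_{x,x'} + H_{x',x'}\bigr)(\gamma^{(1)})(x,x)\, dx.
\end{align*}
I would then take real parts. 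Since $\phi$ (hence $H_x(\phi)$ and $\Delta^2\phi$) is real, the identity $(\ref{label_36})$ gives $\textrm{Re}\int H_x(\phi)\cdot H_{x',x'}(\gamma^{(1)})(x,x)\, dx=\textrm{Re}\int H_x(\phi)\cdot H_{x,x}(\gamma^{(1)})(x,x)\, dx$, the identity $(\ref{label_37})$ gives $\textrm{Re}\int H_x(\phi)\cdot H_{x,x'}(\gamma^{(1)})(x,x)\, dx=\textrm{Re}\int H_x(\phi)\cdot H_{x',x}(\gamma^{(1)})(x,x)\, dx$, and the integral $\int \Delta^2(\phi)\gamma^{(1)}(x,x)\, dx$ is already real because $\gamma^{(1)}(x,x)$ is real by property (C). Substituting these and moving the resulting copy of $\textrm{Re}\int H_x(\phi)\cdot H_{x,x}(\gamma^{(1)})(x,x)\, dx$ back to the left-hand side produces exactly the claimed identity.

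The only delicate point is justifying the differentiations of, and the integrations by parts against, the diagonal restriction $\gamma^{(1)}(x,x)$ given merely that $\gamma^{(1)}\in H^2$ — restriction to the diagonal costs regularity — so I would first carry out the computation for smooth $\gamma^{(1)}$ and then pass to the general case by a density argument. The algebra itself is routine once the chain-rule identity above is in place; the actual content of the lemma is just the bookkeeping of the four Hessian-type terms together with the two Hermitian reductions.
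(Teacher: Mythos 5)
Your proof is correct, but it takes a genuinely different route from the paper. The paper proves the identity by expanding $\gamma^{(1)}(x,x')=\sum_{\ell}\lambda_\ell\,g_\ell(x)\overline{g_\ell(x')}$ (using both the Hermitian property and $\gamma^{(1)}\succeq 0$) and then integrating by parts three times inside this expansion, tracking the cross terms $\sum_\ell\lambda_\ell(\partial_{j,k}\phi)\overline{\partial_j g_\ell}\,\partial_k g_\ell$ which reassemble into $H_{x',x}(\gamma^{(1)})(x,x)$. You instead work scalar-wise with the diagonal restriction $g(x)=\gamma^{(1)}(x,x)$: the chain rule splits its Hessian into the four blocks $H_{x,x}+H_{x',x}+H_{x,x'}+H_{x',x'}$, a double integration by parts against $H_x(\phi)$ produces $\int\Delta^2(\phi)\,\gamma^{(1)}(x,x)\,dx$, and the Hermitian identities (\ref{label_36})--(\ref{label_37}) collapse the four real parts into two, which rearranges to the stated identity. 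Your argument is arguably more elementary: it uses only property (C) (Hermitianity), not positivity, and avoids the decomposition entirely; the paper's spectral route, on the other hand, keeps the computation in the same framework it uses elsewhere (e.g.\ in Proposition \ref{label_29} and in the proofs of the theorems, where such decompositions are needed anyway). The regularity caveat you flag (diagonal traces of second derivatives for $\gamma^{(1)}\in H^2$) is treated just as formally in the paper's own proof, so your density remark puts you on equal footing rather than exposing a gap.
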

\begin{proof}
We may again assume that $\gamma^{(k)}$ is smooth for all $k$.  Since $(\gamma^{(k)})$ is Hermitian and $\gamma^{(k)}\succeq 0$, we may find sequences $(\lambda_\ell)$ and $(g_\ell)$ such that
\begin{align*}
\gamma^{(1)}(x,x')=\sum_{\ell=1}^\infty \lambda_\ell g_\ell(x)\overline{g_\ell(x')}
\end{align*}
for all $(x,x')\in\mathbb{R}^{2d}$.  We then integrate by parts (writing $\partial_{j,k}=\partial_{x_j}\partial_{x_k}$ and $\partial_{j,j,k}=\partial^2_{x_j}\partial_{x_k}$) to obtain
\begin{align*}
&\int H_{x}(\phi)(x)\cdot H_{x,x}(\gamma^{(1)})(x,x)dx\\
&\hspace{0.2in}=\sum_{j,k,\ell=1}^d\lambda_\ell \int(\partial_{j,k}\phi)(x)(\partial_{j,k}g_\ell)(x)\overline{g_\ell(x)}dx\\
&\hspace{0.2in}=-\sum_{j,k,\ell} \lambda_\ell \int\bigg[(\partial_{j,j,k}\phi)(x)\overline{g_\ell(x)}+(\partial_{j,k}\phi)(x)\overline{(\partial_{j}g_\ell)(x)}\bigg](\partial_{x_k}g_\ell)(x)dx
\end{align*}
A second integration by parts shows that this is equal to
\begin{align}
\nonumber &\sum_{j,k,\ell=1}^d \lambda_\ell \int\bigg((\partial_{j,j,k,k}\phi)\overline{g_\ell(x)}+(\partial_{j,j,k}\phi)\overline{\partial_{k}g_\ell(x)}\bigg)g_\ell(x)dx\\
&\hspace{0.4in}-\sum_{j,k,\ell=1}^d \lambda_\ell\int(\partial_{j,k}\phi)\overline{(\partial_{j}g_\ell)(x)}(\partial_{k}g_\ell)(x)dx\label{label_41}
\end{align}
A final integration by parts now gives
\begin{align*}
(\ref{label_41})&=\int \Delta^2(\phi)(x)\gamma^{(1)}(x,x)dx-\sum_{j,k,\ell}\lambda_\ell\int (\partial_{j,k}\phi)(x)\partial_{j}[\overline{\partial_{k}g_\ell(x)}g_{\ell}(x)]dx\\
&\hspace{0.2in}-\sum_{j,k,\ell} \lambda_\ell\int(\partial_{j,k}\phi)(x)\overline{(\partial_{j}g_\ell)(x)}(\partial_{k}g_\ell)(x)\\
&=\int \Delta^2(\phi)(x)\gamma^{(1)}(x,x)dx-\overline{\int H_{x}(\phi)\cdot H_{x,x}(\gamma^{(1)})(x,x)dx}\\
&\hspace{0.2in}-2\textrm{Re}\,\sum_{j,k,\ell} \lambda_\ell\int(\partial_{j,k}\phi)(x)\overline{(\partial_{j}g_\ell)(x)}(\partial_{k}g_\ell)(x),
\end{align*}
which yields
\begin{align*}
2\textrm{Re}\,\int H_{x}(\phi)\cdot H_{x,x}(\gamma^{(1)})(x,x)dx&=\int \Delta^2(\phi)(x)\gamma^{(1)}(x,x)dx\\
&\hspace{0.2in}-2\textrm{Re}\,\int H_{x}(\phi)\cdot H_{x',x}(\gamma^{(1)})(x,x)dx
\end{align*}
as desired.
\end{proof}

\section{Proofs of the main theorems: negative energy blow-up solutions}

In this section, we give the proofs of Theorem \ref{label_5} and Theorem $\ref{label_6}$, our results on negative energy finite-time blowup for ($\ref{label_1}$).  We begin with the proof of Theorem $\ref{label_5}$, for which our arguments are in the spirit of the classical Glassey argument (see also \cite{CPT} for a similar application of this argument to negative-energy blowup for the Gross-Pitaevskii hierarchy).

\begin{proof}[Proof of Theorem $\ref{label_5}$.]
Define $V_1(t)=\Tr(|x|^2\gamma^{(1)}(t))$.  Since $(\gamma^{(k)})$ is Hermitian with $\gamma^{(k)}\succeq 0$, we may find $\lambda_j\geq 0$ and $\psi_j:\mathbb{R}^d\rightarrow\mathbb{C}$ such that
\begin{align*}
V_1(t)=\sum_{j=1}^\infty \int |x|^2\lambda_j|\psi(x)|^2dx\geq 0.
\end{align*}
Suppose for contradiction that the claim fails.  Then $(\gamma^{(k)})_{k\geq 1}$ is a global solution, and thus $V_1(t)$ is defined for all $t\in\mathbb{R}$ with $V(t)\geq 0$ everywhere.  Recall that Proposition $\ref{label_29}$ implies the bound
\begin{align*}
\partial_{tt}V_1(t)&=8\Tr(-\Delta_{x_1}\gamma^{(1)})+4\int \gamma^{(2)}(x,y,x,y)x\cdot (\nabla V)(x-y)dxdy\\
&=16E(t)+4\int \gamma^{(2)}(x,y,x,y)V(x-y)dydx\\
&\hspace{0.2in}+4\int \gamma^{(2)}(x,y,x,y)x\cdot (\nabla V)(x-y)dydx\\
&=16E(t)+4\int \gamma^{(2)}(x,y,x,y)V(x-y)dydx\\
&\hspace{0.2in}+2\int \gamma^{(2)}(x,y,x,y)(x-y)\cdot (\nabla V)(x-y)dydx\\
&=16E(t)+4[(V+\frac{1}{2}x\cdot (\nabla V))*\gamma^{(2)}(x,\cdot,x,\cdot)](x)\\
&\leq 16E(t)
\end{align*}
where we have used the identity
\begin{align}
\nonumber &\int \gamma^{(2)}(x,y,x,y)x\cdot (\nabla V)(x-y)dydx\\
&\hspace{0.2in}=-\int \gamma^{(2)}(x,y,x,y)y\cdot (\nabla V)(x-y)dydx,\label{label_42}
\end{align}
which is a consequence of the symmetry property of $(\gamma^{(k)})$ and the assumption that $V$ is even.

However, $t\mapsto E(t)$ is constant by Proposition $\ref{label_13}$, the conservation of energy.  In particular, we obtain $V_1(t)<0$ for $|t|$ is sufficiently large, which contradicts the positivity of $V_1$.  Thus, we conclude that $(\gamma^{(k)})$ cannot be globally defined.
\end{proof}

We now turn to the proof of Theorem $\ref{label_6}$.  For these arguments, we make use of the truncated virial identity established in Proposition $\ref{label_34}$.  We begin with a variant of this proposition which is adapted to an assumption of radial symmetry and to the particular rescaled cutoff we wish to use.

Fix $\rho\in C_c^2(\mathbb{R})$ such that $\rho(x)\geq 0$ for all $x\in\mathbb{R}$, $\supp(\rho)\subset (1,3)$ with $\rho>0$ on $(\frac{5}{4},\frac{11}{4})$, $\int \rho(x)dx=1$, $\rho'\geq 0$ on $(1,\frac{3}{2})$, and $\rho(x)=\rho(4-x)$ for all $x\in\mathbb{R}$.  Define $\psi:[0,\infty)\rightarrow \mathbb{R}$ by
\begin{align*}
\psi(x)=x-\int_0^x (x-y)\rho(y)dy
\end{align*}
for $x\geq 0$, and for each $R>0$ let $\psi_R$ be given by 
\begin{align*}
\psi_R(x)=R\psi(\frac{|x|^2}{R})\quad\textrm{for}\quad x\in\mathbb{R}^d.
\end{align*}

\begin{lemma}
\label{label_43}
Set $\mu=-1$.  Suppose that $(\gamma^{(k)})_{k\geq 1}$ is an (A)--(D)-admissible solution to (\ref{label_1}) which is radially symmetric in the $x$ and $x'$ variables respectively (in the sense given in the statement of Theorem \ref{label_6}).  We then have
\begin{align}
\nonumber &\partial_{tt}\Tr(\psi_R\gamma^{(1)})\\
\nonumber &\hspace{0.2in}\leq 16 E(0)-8\int(1-\psi'(\frac{|x|^2}{R})-2|x|^2R^{-1}\psi''(\frac{|x|^2}{R})) (\partial_{r,r'}\gamma^{(1)})(x,x)dx\\
\nonumber &\hspace{0.4in}+4\int (V(x-y)+\frac{1}{2}(x-y)\cdot (\nabla V)(x-y)) \gamma^{(2)}(x,y,x,y) dxdy\\
\nonumber &\hspace{0.4in}-2\int a(x,y)\cdot (\nabla V)(x-y)\gamma^{(2)}(x,y,x,y)dxdy\\
&\hspace{0.4in}-\int\Delta^2(\psi_R)(x)\gamma^{(1)}(x,x)dx\label{label_44}
\end{align}
for every $R>0$, where we have written $r=|x|$, $r'=|x'|$, and set
\begin{align}
a(x,y):=(x-y)-(\psi'(\frac{|x|^2}{R})x-\psi'(\frac{|y|^2}{R})y),\quad (x,y)\in\mathbb{R}^{d}\times\mathbb{R}^d.\label{label_45}
\end{align}
\end{lemma}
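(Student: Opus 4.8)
The plan is to feed $\phi=\psi_R$ into the localized virial identity of Proposition~\ref{label_34}, use Lemma~\ref{label_40} to trade the $H_{x,x}(\gamma^{(1)})$ term for an $H_{x',x}(\gamma^{(1)})$ term plus a biharmonic term, collapse the resulting Hessian contractions to a scalar using the radial structure, and finally convert the kinetic part into $16E_1$ plus a potential term via the definition $(\ref{label_4})$ of $E_1$. Since $\psi\in C_0^\infty([0,\infty))$, the cutoff $\psi_R(x)=R\psi(|x|^2/R)$ is admissible in Proposition~\ref{label_34} (and if $\psi$ is only eventually constant rather than compactly supported, the quantities $\nabla\psi_R$, $H_x(\psi_R)$, $\Delta^2(\psi_R)$ are still compactly supported and $\Tr(\psi_R\gamma^{(1)})$ is finite because $\Tr\gamma^{(1)}=1$, so the identity extends to $\psi_R$ by a routine truncation); with $\mu=-1$ it reads
\[
\partial_{tt}\Tr(\psi_R\gamma^{(1)})=2\,\mathrm{Re}\!\int H_x(\psi_R)\cdot\big(H_{x,x'}(\gamma^{(1)})(x,x)-H_{x,x}(\gamma^{(1)})(x,x)\big)\,dx+2\!\int\gamma^{(2)}(x,y,x,y)(\nabla\psi_R)(x)\cdot(\nabla V)(x-y)\,dxdy.
\]
By Lemma~\ref{label_40}, the $H_{x,x}(\gamma^{(1)})$ term equals $\int\Delta^2(\psi_R)\gamma^{(1)}(x,x)\,dx-2\,\mathrm{Re}\int H_x(\psi_R)\cdot H_{x',x}(\gamma^{(1)})(x,x)\,dx$; since $H_x(\psi_R)$ is a symmetric matrix and mixed partials commute, this last integrand coincides pointwise with that of $H_x(\psi_R)\cdot H_{x,x'}(\gamma^{(1)})$, so the Hessian part becomes $4\,\mathrm{Re}\int H_x(\psi_R)\cdot H_{x,x'}(\gamma^{(1)})(x,x)\,dx-\int\Delta^2(\psi_R)\gamma^{(1)}(x,x)\,dx$.

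The next step is the radial reduction. Computing $H_x(\psi_R)=2\psi'(|x|^2/R)\,I+4R^{-1}\psi''(|x|^2/R)\,x\otimes x$ and writing $\gamma^{(1)}(x,x')=g(|x|,|x'|)$, one finds $(\partial_{x_i}\partial_{x'_j}\gamma^{(1)})(x,x)=g_{rr'}(|x|,|x|)\,x_ix_j|x|^{-2}$, which is real by $(\ref{label_37})$. Contracting against $H_x(\psi_R)$ and using $\sum_i x_i^2=|x|^2$ and $\sum_{i,j}x_i^2x_j^2=|x|^4$ collapses the contraction to $\big(2\psi'(|x|^2/R)+4|x|^2R^{-1}\psi''(|x|^2/R)\big)(\partial_{r,r'}\gamma^{(1)})(x,x)$, so the Hessian part is $\int\big(8\psi'+16|x|^2R^{-1}\psi''\big)(\partial_{r,r'}\gamma^{(1)})(x,x)\,dx-\int\Delta^2(\psi_R)\gamma^{(1)}(x,x)\,dx$, with $\psi',\psi''$ evaluated at $|x|^2/R$. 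I would then write $8\psi'+16|x|^2R^{-1}\psi''=-8\big(1-\psi'-2|x|^2R^{-1}\psi''\big)+8$; for the leftover $+8$, radiality together with $(\ref{label_11})$ gives $\int(\partial_{r,r'}\gamma^{(1)})(x,x)\,dx=\Tr(-\Delta_{x_1}\gamma^{(1)})$, while $(\ref{label_4})$ with $\mu=-1$ gives $8\Tr(-\Delta_{x_1}\gamma^{(1)})=16E_1+4\Tr(B^+_{1,2}\gamma^{(2)})=16E_1+4\int\gamma^{(2)}(x,y,x,y)V(x-y)\,dxdy$. This produces $16E_1$, the $\big(1-\psi'-2|x|^2R^{-1}\psi''\big)$ term, the biharmonic term of $(\ref{label_44})$, and an additional term $4\int\gamma^{(2)}(x,y,x,y)V(x-y)\,dxdy$ to be absorbed below.

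Finally, I would reorganize the nonlinearity. Since $\nabla\psi_R(x)=2\psi'(|x|^2/R)x$, the remaining contribution is $4\int\gamma^{(2)}(x,y,x,y)\psi'(|x|^2/R)\,x\cdot(\nabla V)(x-y)\,dxdy$. Swapping the dummy variables $x\leftrightarrow y$, using the permutation symmetry (B) of $\gamma^{(2)}$ and that $V$ is even (so $\nabla V$ is odd), one gets $\int\gamma^{(2)}\psi'(|y|^2/R)\,y\cdot(\nabla V)(x-y)\,dxdy=-\int\gamma^{(2)}\psi'(|x|^2/R)\,x\cdot(\nabla V)(x-y)\,dxdy$, so this term equals $2\int\gamma^{(2)}\big(\psi'(|x|^2/R)x-\psi'(|y|^2/R)y\big)\cdot(\nabla V)(x-y)\,dxdy=2\int\gamma^{(2)}(x-y)\cdot(\nabla V)(x-y)\,dxdy-2\int\gamma^{(2)}\,a(x,y)\cdot(\nabla V)(x-y)\,dxdy$, with $a$ as in $(\ref{label_45})$. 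Adding the earlier term $4\int\gamma^{(2)}V(x-y)\,dxdy$ and noting $4\int\gamma^{(2)}V+2\int\gamma^{(2)}(x-y)\cdot\nabla V=4\int\big(V+\tfrac12(x-y)\cdot\nabla V\big)\gamma^{(2)}\,dxdy$, one obtains $(\ref{label_44})$ — in fact as an identity, so the claimed inequality follows a fortiori. The step requiring the most care is precisely this last rearrangement: the $\gamma^{(2)}$, $\nabla V$ and cutoff factors must be organized so that exactly the combination $a(x,y)$ of $(\ref{label_45})$ and the factor $V+\tfrac12(x-y)\cdot\nabla V$ (which $(\ref{label_7})$ will later render nonpositive) emerge, whereas the radial collapse of the Hessian and the use of Lemma~\ref{label_40} are routine.
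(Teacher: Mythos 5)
Your proposal is correct and follows essentially the same route as the paper's proof: apply Proposition~\ref{label_34} with $\phi=\psi_R$ together with Lemma~\ref{label_40}, collapse the Hessian contraction using radiality to $8\big(\psi'+\tfrac{2|x|^2}{R}\psi''\big)(\partial_{r,r'}\gamma^{(1)})(x,x)$, introduce $16E_1$ via the definition (\ref{label_4}) and conservation of energy, and use the $x\leftrightarrow y$ symmetry with $V$ even to produce the combination $a(x,y)$ and $V+\tfrac12(x-y)\cdot\nabla V$. Your observations that the derivation actually yields an identity (so the stated inequality holds a fortiori) and that the non-compactly-supported $\psi_R$ is handled by a routine truncation are consistent with, and slightly more careful than, the paper's own presentation.
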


This will be complemented by a ``truncation lemma'' estimating $a(x,y)$.  For some related estimates used in the analysis of the Hartree equation, see \cite{H} and the references cited there.

\begin{lemma}
\label{label_18}
For $x,y\in \mathbb{R}^d$, let $a(x,y)$ be the expression defined in (\ref{label_45}).  For all $R\geq 0$, define $F_R:[0,\infty)\rightarrow \mathbb{R}$ by
\begin{align}
F_R(r):=\int_0^{r^2/R}\rho(s)ds\label{label_17}
\end{align}
for $r\geq 0$.

Then  each $R\geq 1$ there exists $C>0$ such that for every $x,y\in\mathbb{R}^d$ with $\max\{|x|,|y|\}\geq R^{1/2}$ and $|x-y|\leq R^{1/2}$, we have
\begin{align}
\nonumber &|a(x,y)|\\
&\hspace{1.2in}\lesssim \bigg(F_R(|x|)+\frac{|x|^2}{R}\rho\Big(\frac{|x|^2}{R}\Big)+F_R(|y|)+\frac{|y|^2}{R}\rho\Big(\frac{|y|^2}{R}\Big)\bigg) |x-y|.\label{label_19}
\end{align}
\end{lemma}

The proofs of Lemma $\ref{label_43}$ and Lemma $\ref{label_18}$ are given in Appendix \ref{label_app_localized}.

We now continue with the proof of Theorem $\ref{label_6}$.  The general pattern of argument is closely related to the approach of Ogawa and Tsutsumi \cite{OT}; see also \cite{H} for an earlier application of the method to the nonlinear Schr\"odinger
equation with Hartree nonlinearity, as well as a textbook treatment in \cite[Theorem $6.5.10$]{C}.  

\begin{proof}[Proof of Theorem $\ref{label_6}$.]
For each $R>0$, let $\rho$, $\psi$, and $F_R$ be as defined in Section $2.2$, and define $\psi_R$ by $\psi_R(x)=R\psi(\frac{|x|^2}{R})$ as in Lemma $\ref{label_43}$.  Applying Lemma $\ref{label_43}$ and using ($\ref{label_7}$) and ($\ref{label_50}$) to see that the third term of the resulting bound is non-positive, we bound $\partial_{tt}\Tr(\psi_R\gamma^{(1)})$ by 
\begin{align}
16 E(0)+(II)+(III)+(IV),\label{label_51}
\end{align}
with
\begin{align*}
(II):=-8\int\bigg(1-\psi'(\frac{|x|^2}{R})-2|x|^2R^{-1}\psi''(\frac{|x|^2}{R})\bigg) (\partial_{r,r'}\gamma^{(1)})(x,x)dx,
\end{align*}
\begin{align*}
(III):=-2\int a(x,y)\cdot (\nabla V)(x-y)\gamma^{(2)}(x,y,x,y)dxdy,
\end{align*}
and
\begin{align*}
(IV):=-\int\Delta^2(\psi_R)(x)\gamma^{(1)}(x,x)dx.
\end{align*}

As in the proof of Theorem $\ref{label_5}$, our goal is to bound this by a negative quantity, uniformly in $t$.  Since $E(0)<0$ by assumption, it suffices to estimate the sum of $(II)$, $(III)$, and $(IV)$.  

We begin by re-expressing $(II)$ in a more convenient form (recognizing it as a strictly negative quantity).  Note that
\begin{align*}
1-\psi'(\frac{|x|^2}{R})-2|x|^2R^{-1}\psi''(\frac{|x|^2}{R})=2|x|^2R^{-1}\rho\bigg(\frac{|x|^2}{R}\bigg)+\int_0^{|x|^2R^{-1}} \rho(y)dy
\end{align*}
for every $x\in\mathbb{R}^d$.  Combined with the integral expression ($\ref{label_48}$) for $\gamma^{(1)}$ (with respect to the measure $\mu$ on $L^2_{\rad}$), this leads (via an application of the Tonelli theorem to interchange the order of integration) to the representation
\begin{align}
\nonumber (II)&=-8\int \bigg(F_R(|x|)+\frac{2|x|^2}{R}\rho(\frac{|x|^2}{R})\bigg)(\partial_{r,r'}\gamma^{(1)})(x,x)dx\\
&=-8\int \bigg(F_R(|x|)+\frac{2|x|^2}{R}\rho(\frac{|x|^2}{R})\bigg)|\nabla \phi(x)|^2dxd\mu(\phi).\label{label_52}
\end{align}

We now turn to $(III)$.  Observing that $a(x,y)=0$ on $(\mathbb{R}^d\times\mathbb{R}^d)\setminus A_R$, with $A_R:=\{(x,y):\max\{|x|,|y|\}\geq R^{1/2}\}$, this term becomes
\begin{align}
\nonumber &-2\int_{A_R} a(x,y)\cdot (\nabla V)(x-y)\gamma^{(2)}(x,y,x,y)dxdy\\
\nonumber &\hspace{0.2in}\leq 2\int_{A_R\cap \{(x,y):|x-y|>R^{1/2}\}} |a(x,y)\cdot (\nabla V)(x-y)|\gamma^{(2)}(x,y,x,y)dxdy\\
&\hspace{0.4in}+2\int_{A_R\cap \{(x,y):|x-y|\leq R^{1/2}\}} |a(x,y)\cdot (\nabla V)(x-y)|\gamma^{(2)}(x,y,x,y)dxdy.\label{label_53}
\end{align}
where we have used ($\ref{label_50}$).  We let the first and second terms in ($\ref{label_53}$) be denoted by $(IIIa)$ and $(IIIb)$, respectively.

To estimate $(IIIa)$, note that we may find $C>0$ such that $|a(x,y)|\leq C|x-y|$ for all $x$ and $y$.  We then have
\begin{align*}
(IIIa)&\leq C\int_{|x-y|\geq R^{1/2}} |x-y|\,|(\nabla V)(x-y)|\gamma^{(2)}(x,y,x,y)dxdy\\
&\leq C\sup_{|x-y|\geq R^{1/2}} \bigg(|x-y|\,|(\nabla V)(x-y)|\bigg)\int \gamma^{(2)}(x,y,x,y)dxdy\\
&=C\sup_{|x-y|\geq R^{1/2}} \bigg(|x-y|\,|(\nabla V)(x-y)|\bigg)\int \gamma_0^{(1)}(x,x)dx.
\end{align*}
where we have used the admissibility of $\gamma^{(1)}$ and Proposition $\ref{label_12}$ to obtain the last equality.  We may then use the hypothesis $(\ref{label_8})$ to choose $R$ sufficiently large so that 
\begin{align}
(IIIa)\leq 4|E(0)|.\label{label_54}
\end{align}

We now turn to $(IIIb)$.  
Applying Lemma $\ref{label_18}$, we obtain
\begin{align*}
(IIIb)&\leq 2C\int_{A_R\cap \{(x,y):|x-y|\leq R^{1/2}\}} \bigg(F_R(|x|)+F_R(|y|)\\
&\hspace{2.2in}+\frac{|x|^2}{R}\rho\bigg(\frac{|x|^2}{R}\bigg)+\frac{|y|^2}{R}\rho\bigg(\frac{|y|^2}{R}\bigg)\bigg)|x-y|\\
&\hspace{2.4in} \cdot|(\nabla V)(x-y)|\gamma^{(2)}(x,y,x,y)dxdy,
\end{align*}
which, in view of the symmetry properties of (A)--(D)-admissible solutions, is bounded by a multiple of
\begin{align}
\nonumber &\int_{A_R\cap \{(x,y):|x-y|\leq R^{1/2}\}} \bigg(F_R(|x|)+\frac{|x|^2}{R}\rho\bigg(\frac{|x|^2}{R}\bigg)\bigg)|x-y|\\
&\hspace{1.8in}\cdot|(\nabla V)(x-y)|\gamma^{(2)}(x,y,x,y)dxdy.\label{label_55}
\end{align}

We now make use of ($\ref{label_49}$), the integral representation for $\gamma^{(2)}$ given by the quantum de Finetti theorem.  Substituting this into our bound for $(IIIb)$, we get (in view of the Tonelli theorem, and the H\"older and Young inequalities)
\begin{align*}
(\ref{label_55})&\lesssim \int \bigg(F_R(|x|)+\frac{|x|^2}{R}\rho\bigg(\frac{|x|^2}{R}\bigg)\bigg)|\phi(x)|^2\Big((|x|\,|\nabla V(x)|\chi_{R}(x)) \ast |\phi|^2\Big)(x)dxd\mu(\phi)\\
&\leq \int \bigg\lVert \bigg(F_R(|x|)+\frac{|x|^2}{R}\rho(\frac{|x|^2}{R}\bigg)\bigg)|\phi(x)|^2\bigg\rVert_{L_x^\infty}\\
&\hspace{1.2in} \cdot\lVert (|x|\,|\nabla V(x)|\chi_R(x))\ast |\phi(x)|^2\rVert_{L_x^1}d\mu(\phi)\\
&\lesssim \int \bigg\lVert \bigg(F_R(|x|)+\frac{|x|^2}{R}\rho\bigg(\frac{|x|^2}{R}\bigg)\bigg)|\phi(x)|^2\bigg\rVert_{L_x^\infty}\\
&\hspace{1.2in} \cdot \lVert |x|\,|\nabla V(x)|\rVert_{L_x^1(|x|\leq R^{1/2})}\lVert \phi\rVert_{L_x^2}^2d\mu(\phi),
\end{align*}
where we have let $\chi_{R}=\chi_{\{|x|\leq R^{1/2}\}}$ denote the characteristic function of the ball of radius $R^{1/2}$ centered at the origin.

Now, note that $F_R(|x|)=0$ and $\rho(|x|^2/R)=0$ for $|x|^2\leq R$, and, moreover, there exists $C>0$ such that $F_R(|x|)\leq C$ and $\frac{|x|^2}{R}\rho(|x|^2/R)\leq C$ for all $x\in\mathbb{R}^d$.  Combining these bounds with the equality 
\begin{align}
\lVert \phi\rVert_{L^2}=1,\label{label_56}
\end{align}
which is valid on the support of $\mu$, the above expression is bounded by a multiple of
\begin{align}
A_R\int \bigg\lVert \bigg(F_R(|x|)+\frac{|x|^2}{R}\rho\bigg(\frac{|x|^2}{R}\bigg)\bigg)^{1/2}\phi\bigg\rVert_{L_x^\infty(|x|\geq R^{1/2})}^2 d\mu(\phi),\label{label_57}
\end{align}
with $A_R:=\lVert |x|\, |\nabla V(x)\rVert_{L_x^1(|x|\leq R^{1/2})}$.

Recalling that $\mu$ is a measure on $L^2_{\rad}$, we now invoke a form of the Strauss lemma for radial functions (see, e.g. Lemma $1.7.3$ and Lemma $6.5.11$ in \cite{C}), giving the inequality
\begin{align*}
\lVert |x|^{(N-1)/2}f(x)g(x)\rVert_{L_x^\infty}^2&\lesssim \lVert f\nabla f\rVert_{L_x^\infty}\lVert g\rVert_{L_x^2}^2+\lVert fg\rVert_{L_x^2}\lVert f\nabla g\rVert_{L_x^2}\\
&\lesssim \lVert f\nabla f\rVert_{L_x^\infty}\lVert g\rVert_{L_x^2}^2+\lVert fg\rVert_{L_x^2}^2+\lVert f\nabla g\rVert_{L_x^2}^2
\end{align*}
with $f\in C^1(\mathbb{R}^N)$ and $g\in H^1(\mathbb{R}^N)$ both radial, where we have fixed $N\geq 2$.  Applying this inequality with $f(x)=(F_R(|x|)+\frac{|x|^2}{R}\rho(\frac{|x|^2}{R}))^{1/2}$, $x\in\mathbb{R}^d$ and $g=\phi$, we compute
\begin{align*}
|(f\nabla f)(x)|\lesssim \bigg(\frac{|x|}{R}+\frac{|x|^3}{R^2}\bigg)\lVert\rho\rVert_{C^1(\mathbb{R})}\chi_{\{x:1<|x|^2/R<3\}}(x)\lesssim R^{-1/2}
\end{align*}
for $x\in\mathbb{R}^d$, and therefore obtain
\begin{align}
\nonumber (\ref{label_57})&\lesssim \frac{A_R}{R^{(d-1)/2}}\int \bigg(R^{-1/2}\lVert \phi\rVert_{L_x^2}^2+\lVert \phi\rVert_{L^2}^2\\
\nonumber &\hspace{0.8in} +\bigg\lVert \bigg(F_R(|x|)+\frac{|x|^2}{R}\rho\bigg(\frac{|x|^2}{R}\bigg)\bigg)^{1/2}|\nabla \phi|\bigg\rVert_{L_x^2}^2\bigg)d\mu(\phi)\\
&\lesssim \frac{A_R}{R^{(d-1)/2}}\int \bigg(1+\bigg\lVert \bigg(F_R(|x|)+\frac{2|x|^2}{R}\rho\bigg(\frac{|x|^2}{R}\bigg)\bigg)^{1/2}|\nabla \phi|\bigg\rVert_{L_x^2}^2\bigg)d\mu(\phi),\label{label_58}
\end{align}
where we have used the observations made above about boundedness and support of $x\mapsto F_R(|x|)$ and $x\mapsto \frac{|x|^2}{R}\rho(|x|^2/R)$, and again invoked ($\ref{label_56}$).

Rewriting the right side of (\ref{label_58}) as
\begin{align*}
&\frac{A_R}{R^{(d-1)/2}}+\frac{A_R}{R^{(d-1)/2)}}\int \bigg(F_R(|x|)+\frac{2|x|^2}{R}\rho\bigg(\frac{|x|^2}{R}\bigg)\bigg)|\nabla \phi|^2dx d\mu(\phi)\\
&\hspace{0.2in}=\frac{A_R}{R^{(d-1)/2}}+\frac{A_R}{8R^{(d-1)/2}}|(II)|,
\end{align*}
where the last equality follows from (\ref{label_52}), the hypothesis (\ref{label_9}) implies that for $R$ sufficiently large (to ensure $CA_R/R^{(d-1)/2}\leq 4|E(0)|$ and $CA_R/(8R^{(d-1)/2})\leq 1/2$) we have the bound
\begin{align}
(IIIb)&\leq 4|E(0)|+\frac{1}{2}|(II)|.\label{label_59}
\end{align}

We now estimate $(IV)$.  Note that $\Delta^2\psi_R=0$ for $|x|\geq 3R^{1/2}$, and that
\begin{align*}
\Delta^2\psi_R&=\frac{16|x|^4}{R^3}\psi''''(\frac{|x|^2}{R})+\frac{16|x|^2(d+2)}{R^2}\psi'''(\frac{|x|^2}{R})+\frac{4d(d+2)}{R}\psi''(\frac{|x|^2}{R}).
\end{align*}
We then get the bound
\begin{align}
\nonumber (IV)&\leq \lVert \Delta^2\psi_R\rVert_{L^\infty(|x|\leq 2R^{1/2})}\int \gamma^{(1)}(t,x,x)dx\\
\label{label_60}&\lesssim R^{-1}\int \gamma^{(1)}(0,x,x) dx
\end{align}
where we have used Proposition $\ref{label_12}$ to obtain the last equality.  We may then choose $R$ sufficiently large so that 
\begin{align}
(IV)\leq 4|E(0)|.\label{label_61}
\end{align}  

Combining (\ref{label_51}) with (\ref{label_54}), (\ref{label_59}) and (\ref{label_61}), we obtain
\begin{align*}
\partial_{tt}\Tr(\psi_R\gamma^{(1)})&\leq 4E(0)+\frac{1}{2}(II)\leq 4E(0)
\end{align*}
for $R$ sufficiently large, where we have used that the identity (\ref{label_52}) implies $(II)\leq 0$.  Since this quantity is independent of $t$ and strictly negative, while $\Tr(\psi_R\gamma^{(1)})$ is strictly positive for all $t$ in the interval of existence, the result follows as before.
\end{proof}

\appendix

%
%
%
%

\section{Localized virial identities adapted to scaling}
\label{label_app_localized}

In this brief appendix we give the proofs of Lemma $5.1$ and Lemma $5.2$.  These lemmas are used in the proof of Theorem $1.2$.  For convenience, we recall that $\rho\in C_c^2(\mathbb{R};[0,\infty))$ satisfies $\supp(\rho)\subset (1,3)$, $\rho>0$ on $(\frac{5}{4},\frac{11}{4})$, $\int \rho(x)dx=1$, $\rho'\geq 0$ on $(1,\frac{3}{2})$, and $\rho(x)=\rho(4-x)$ for all $x\in\mathbb{R}$, and $\psi$ and $\psi_R$, $R>0$, are given by
\begin{align*}
\psi(x)=x-\int_0^x (x-y)\rho(y)dy,\quad x\geq 0,
\end{align*}
and
\begin{align*}
\psi_R(x)=R\psi(\frac{|x|^2}{R})\quad\textrm{for}\quad x\in\mathbb{R}^d.
\end{align*}

We begin with the proof of Lemma $\ref{label_43}$.

\begin{proof}[Proof of Lemma \ref{label_43}]
Let $R>0$ be given.  As in our earlier results, there is no loss of generality (in view of the local theory described in Section $2$) in assuming $\gamma^{(k)}\in C_t(I;\mathcal{S}(\mathbb{R}^{dk}\times\mathbb{R}^{dk}))$ for all $k\geq 1$.  Applying Proposition $\ref{label_34}$ and Lemma $\ref{label_40}$, we obtain (since we have taken $\mu=-1$)
\begin{align}
\nonumber &\partial_{tt}\Tr(\psi_R\gamma^{(1)})\\
\nonumber &\hspace{0.4in}=4\textrm{Re}\,\int H_{x}(\psi_R)(x)\cdot H_{x,x'}(\gamma^{(1)})(x,x)dx-\int \Delta^2(\psi_R)(x)\gamma^{(1)}(x,x)dx\\
\nonumber &\hspace{0.8in}+2\int \gamma^{(2)}(x,y,x,y)(\nabla \psi_R)(x)\cdot (\nabla V)(x-y)dxdy.
\end{align}

Setting $r=|x|$, $r'=|x'|$, and computing derivatives of $\psi_R$, this is equal to
\begin{align}
\nonumber &4\textrm{Re}\,\sum_{j,k}\int \partial_{x_j}[2x_k\psi'(\frac{|x|^2}{R})](\partial_{x_j}[\partial_{r'}\gamma^{(1)}\frac{x'_k}{|x'|}])\bigg|_{(x,x)}-\int \Delta^2(\psi_R)(x)\gamma^{(1)}(x,x)dx\\
\nonumber &\hspace{0.4in}+4\int \psi'(\frac{|x|^2}{R})(x)\gamma^{(2)}(x,y,x,y)x\cdot (\nabla V)(x-y)dxdy.
\end{align}
Integrating by parts, this becomes
\begin{align}
\nonumber &8\textrm{Re}\,\sum_j \int \bigg[\frac{x_j^2}{|x|^2}\psi'(\frac{|x|^2}{R})(\partial_{rr'}\gamma^{(1)})(x,x)\\
\nonumber &\hspace{1.2in}+\sum_k\frac{2x^2_kx^2_j}{R|x|^2}\psi''(\frac{|x|^2}{R})(\partial_{rr'}\gamma^{(1)})(x,x)\bigg]dx\\
\nonumber &\hspace{0.2in}-\int\Delta^2(\psi_R)(x)\gamma^{(1)}(x,x)dx\\
&\hspace{0.2in}+4\int \psi'(\frac{|x|^2}{R})(x)\gamma^{(2)}(x,y,x,y)x\cdot (\nabla V)(x-y)dxdy.\label{label_46}
\end{align}

Now, evaluating the sum in $j$ and $k$, and observing that since $(\gamma^{(k)})$ is (A)--(D)-admissible (and in particular Hermitian), $\partial_{r,r'}\gamma^{(1)}(x,x)$ is real for all $x\in\mathbb{R}^d$, we obtain 
\begin{align}
\nonumber (\ref{label_46})&=8\int \bigg[\psi'(\frac{|x|^2}{R})(\partial_{rr'}\gamma^{(1)})(x,x)dx+\frac{2|x|^2}{R}\psi''(\frac{|x|^2}{R})(\partial_{rr'}\gamma^{(1)})(x,x)\bigg]dx\\
\nonumber &\hspace{0.2in}+4\int \psi'(\frac{|x|^2}{R})(x)\gamma^{(2)}(x,y,x,y)x\cdot (\nabla V)(x-y)dxdy\\
&\hspace{0.2in}-\int\Delta^2(\psi_R)(x)\gamma^{(1)}(x,x)dx,\label{label_47}
\end{align}
We next add and subtract $16E(0)$, where $E(t)$ is as the energy defined in ($\ref{label_4}$), which is conserved in time by Proposition $\ref{label_13}$.  This gives
\begin{align*}
(\ref{label_47})&=16 E(0)-8\int(1-\psi'(\frac{|x|^2}{R})-\frac{2|x|^2}{R}\psi''(\frac{|x|^2}{R})) (\partial_{r,r'}\gamma^{(1)})(x,x)dx\\
&\hspace{0.2in}+4\int \gamma^{(2)}(x,y,x,y)(V(x-y)+\frac{1}{2}(x-y)\cdot (\nabla V)(x-y))dxdy\\
&\hspace{0.2in}-2\int \gamma^{(2)}(x,y,x,y)(x-y)\cdot (\nabla V)(x-y)dxdy\\
&\hspace{0.2in}+4\int \psi'(\frac{|x|^2}{R})\gamma^{(2)}(x,y,x,y)x\cdot (\nabla V)(x-y)dxdy\\
&\hspace{0.2in}-\int\Delta^2(\psi_R)(x)\gamma^{(1)}(x,x)dx.
\end{align*}

The desired result now follows from the identity,
\begin{align*}
\nonumber &\int \psi'(\frac{|x|^2}{R})\gamma^{(2)}(x,y,x,y)x\cdot (\nabla V)(x-y)dydx\\
&\hspace{0.2in}=-\int \psi'(\frac{|y|^2}{R})\gamma^{(2)}(x,y,x,y)y\cdot (\nabla V)(x-y)dydx,
\end{align*}
which, in a similar manner to the related identity ($\ref{label_42}$) used in the proof of Theorem $\ref{label_5}$, follows from the symmetry properties of (A)--(D)-admissible solutions.
\end{proof}

We now turn to the proof of Lemma \ref{label_18}, which is an estimate for the quantity $a(x,y)$ defined in (\ref{label_45}).

\begin{proof}[Proof of Lemma \ref{label_18}]
Since $\psi'(t)=1-\int_0^{t}\rho(s)ds$ for $t\geq 0$, we may define 
\begin{align*}
f(t):=(x+t(y-x))F_R(|x+t(y-x)|)
\end{align*}
for $0\leq t\leq 1$, and observe that with this notation the left side of ($\ref{label_19}$) becomes
$|f(1)-f(0)|$.  To estimate this quantity, we write 
\begin{align}
|f(1)-f(0)|=\bigg|\int_0^1 f'(s)ds\bigg|\leq \int_0^1 |f'(s)|ds\label{label_20}
\end{align}
and estimate
\begin{align}
\nonumber |f'(s)|&\leq |y-x|\sup_s |F_R(|x+s(y-x)|)|\\
&\hspace{0.4in}+\sup_s |(x+s(y-x))\frac{d}{ds}[F_R(|x+s(y-x)|)]|.\label{label_21}
\end{align}

Now, since
$|x+s(y-x)|\leq \max\{|x|,|y|\}$ holds for $0\leq s\leq 1$, it follows from the observation that $F_R$ is increasing on $[0,\infty)$ that 
\begin{align}
F_R(|x+s(y-x)|)\leq \max\{F_R(|x|),F_R(|y|)\}\leq F_R(|x|)+F_R(|y|).\label{label_22}
\end{align}

On the other hand, explicit computation gives
\begin{align}
\frac{d}{ds}[F_R(|x+s(y-x)|)]&=\rho(|x+s(y-x)|^2/R)\frac{2(x+s(y-x))\cdot (y-x)}{R},\label{label_23}
\end{align}
so that (combining ($\ref{label_20}$), ($\ref{label_21}$), and ($\ref{label_22}$) with ($\ref{label_23}$)) we obtain
\begin{align}
\nonumber |f(1)-f(0)|&\leq \bigg(F_R(|x|)+F_R(|y|)\\
&\hspace{0.4in}+\sup_s \frac{2|x+s(y-x)|^2}{R}\rho(|x+s(y-x)|^2/R)\bigg)|x-y|.\label{label_24}
\end{align}

To estimate the supremum, We now consider three cases depending on the sizes of $|x|^2/R$ and $|y|^2/R$.  

\vspace{0.2in}

\noindent \underline{Case $1$}: Suppose first that $|x|^2/R<\frac{3}{2}$ and $|y|^2/R<\frac{3}{2}$.  Invoking once again $|x+s(y-x)|\leq \max\{|x|,|y|\}$ for $s\in [0,1]$, we then have
\begin{align*}
\frac{|x+s(y-x)|^2}{R}\leq \max\bigg\{\frac{|x|^2}{R},\frac{|y|^2}{R}\bigg\}\leq \frac{3}{2}
\end{align*}
for all such $s$.  Since $\rho$ is increasing on $(1,\frac{3}{2})$, this gives
\begin{align*}
\sup_s \frac{|x+s(y-x)|^2}{R}\rho\bigg(\frac{|x+s(y-x)|^2}{R}\bigg)&\leq \max\bigg\{\frac{|x|^2}{R}\rho\bigg(\frac{|x|^2}{R}\bigg),\frac{|y|^2}{R}\rho\bigg(\frac{|y|^2}{R}\bigg)\bigg\}\\
&\leq \frac{|x|^2}{R}\rho\bigg(\frac{|x|^2}{R}\bigg)+\frac{|y|^2}{R}\rho\bigg(\frac{|y|^2}{R}\bigg)
\end{align*}
which leads to the desired bound in this case.

\vspace{0.2in}

\noindent \underline{Case $2$}: We now consider the case when either (i) $|x|^2/R<\frac{3}{2}$ and $|y|^2/R\geq \frac{3}{2}$; or (ii) $|x|^2/R\geq \frac{3}{2}$ and $|y|^2/R<\frac{3}{2}$.  Since the estimate obtained is symmetric in $x$ and $y$, we may assume that we are in case (i) without any loss of generality.  In this setting, $|x+s(y-x)|\leq \max\{|x|,|y|\}=|y|$ for $s\in [0,1]$ gives
\begin{align*}
\sup_s \frac{|x+s(y-x)|^2}{R}\rho\bigg(\frac{|x+s(y-x)|^2}{R}\bigg)\leq \frac{|y|^2}{R}\lVert \rho\rVert_{L^\infty}.
\end{align*}
Noting that the triangle inequality and the hypothesis $|x-y|\leq R^{1/2}$ imply
\begin{align*}
\frac{|y|^2}{R}\leq \frac{(|y-x|+|x|)^2}{R}\leq (1+\sqrt{3/2})^2,
\end{align*}
and observing that $|y|^2/R\geq \frac{3}{2}$ implies
\begin{align}
F_R(|y|)\geq \int_0^{3/2} \rho ds=:c_0(\rho)>0,\label{label_25}
\end{align}
we get the bound
\begin{align*}
\sup_s \frac{|x+s(y-x)|^2}{R}\rho\bigg(\frac{|x+s(y-x)|^2}{R}\bigg)&\leq C\lVert \rho\rVert_{L^\infty}\leq \frac{C\lVert\rho\rVert_{L^\infty}}{c_0(\rho)}F_R(|y|)
\end{align*}
which again leads to the desired inequality.

\vspace{0.2in}

\noindent \underline{Case $3$}:  Suppose now that $|x|^2/R\geq \frac{3}{2}$ and $|y|^2/R\geq \frac{3}{2}$.  In this case we note that, in view of the assumption $|x-y|\leq R^{1/2}$, the condition $|x|^2/R\geq 10$ implies $\rho(|x+s(y-x)|^2/R)=0$ for all $0\leq s\leq 1$.  Indeed, if $|x|^2/R\geq 10$, we have
\begin{align*}
|x+s(y-x)|\geq |x|-|y-x|\geq (\sqrt{10}-1)R^{1/2}
\end{align*}
for all $s\in [0,1]$, so that
\begin{align*}
\inf_{s\in [0,1]}\frac{|x+s(y-x)|^2}{R}\geq (\sqrt{10}-1)^2>3,
\end{align*}
and the desired conclusion follows from the hypothesis $\supp \rho\subset (1,3)$.

It follows that whenever
\begin{align*}
\rho_*(x,y):=\sup_{s\in [0,1]} \rho\bigg(\frac{|x+s(y-x)|^2}{R}\bigg)
\end{align*}
is nonzero, we have
\begin{align*}
\frac{|x+s(y-x)|^2}{R}\leq \frac{2(|x|^2+|y-x|^2)}{R}\leq 22.
\end{align*}
Choosing $c_0(\rho)$ as in ($\ref{label_25}$), we therefore obtain
\begin{align*}
\sup_s \frac{|x+s(y-x)|^2}{R}\rho\bigg(\frac{|x+s(y-x)|^2}{R}\bigg)&\leq 22\lVert \rho\rVert_{L^\infty}\leq \frac{22}{c_0(\rho)}F_R(|y|).
\end{align*}
This again leads to the desired estimate.

\vspace{0.2in}

Since these three cases cover all possible values of $|x|$ and $|y|$, this completes the proof of the lemma.
\end{proof}

\end{document}